\newtheorem{theorem}{Theorem}[section]
\newtheorem{lemma}{Lemma}[section]
\newtheorem{claim}{Claim}[section]
\newtheorem{conjecture}{Conjecture}[section]
\journal{Discrete Applied Mathematics}
\begin{document}

\begin{frontmatter}

\title{The Total Coloring Conjecture holds for planar graphs without  three special subgraphs}

\author[affiliation1]{Rongjin Su}
\affiliation[affiliation1]{organization={School of Computer Science and Cyber Engineering,Guangzhou University},
            city={Guangzhou},
            country={China}}

\author[affiliation2]{Gang Fang}

\author[affiliation2]{Enqiang Zhu\corref{correspondingauthor}}
\cortext[correspondingauthor]{Corresponding author.}
\ead{zhuenqiang@gzhu.edu.cn}
\affiliation[affiliation2]{organization={Institute of Computing Science and Technology,Guangzhou University},
            city={Guangzhou},
            country={China}}

\begin{abstract}

   The Total Coloring Conjecture (TCC) for planar graphs with a maximum degree of six remains open. Previous studies suggest that TCC is valid for such graphs if they do not contain any subgraph isomorphic to a 4-fan. In this paper, we present an improved conclusion by establishing that TCC holds for planar graphs that are free of  three particular substructures, namely the mushroom, the tent, and the cone. This advancement enhances previous findings by demonstrating that TCC is applicable to planar graphs with a maximum degree of six, which can accommodate sparse 4-fans, 5-fans, 5-wheels, and 6-wheels.   
\end{abstract}

\begin{keyword}
   Total coloring      \sep
   Planar graphs       \sep
   Subgraphs       \sep
   Discharging
\end{keyword}

\end{frontmatter}


\section{Introduction}\label{sec:introduction}

The graph coloring problem is renowned for its theoretical significance and extensive real-world applications, including the total coloring problem \cite{leidner2012study}. For example, in a judo tournament, multiple matches may occur simultaneously, involving two athletes in each game. Athletes manage their time through three options: they can compete in matches, remain in the arena without participating, or leave the arena during their free periods for rest. In this context, a judo tournament graph can be constructed by representing athletes as vertices and matches as edges connecting these vertices. Two vertices are adjacent if and only if the corresponding athletes are competing against each other in a match. The objective is to minimize the number of periods, which consist of both match periods and free periods, satisfying the following conditions: First,  each athlete may participate in only one match during any given match period; second, athletes competing in an identical match cannot leave the arena in the same free period for safety reasons; third, athletes may only leave the arena during their designated free periods. By coloring the vertices and edges of the judo tournament graph such that no adjacent or incident elements share the same color, this problem can be modeled as the total coloring problem.

Given a graph $G$ with a vertex set $V(G)$ and an edge set $E(G)$, a total $k$-coloring of $G$ is defined as a mapping $\phi$ from $V(G)\cup E(G)$ to a color set $Y=\{1,2,\cdots,k-1,k\}$ of $k$ colors, satisfying that adjacent vertices cannot be assigned the same color, adjacent edges cannot be assigned the same color, and no vertex can share a color with its incident edges. Given an $S\subseteq V(G)\cup E(G)$, a \textit{partial total $k$-coloring regarding $S$} of $G$ is an assignment of colors to elements in $S$ that satisfies the restriction of total coloring. A graph is $totally$ $k$-$colorable$ if it admits a total $k$-coloring. The minimum $k$ for which a graph $G$ is totally $k$-colorable is called the $total$ $chromatic$ $number$ of $G$, denoted by ${\chi''(G)}$.

In the 1960s, Behzad \cite{behzad1965graphs} and Vizing \cite{vizing1968some} independently proposed the Total Coloring Conjecture (TCC), saying that 
\begin{conjecture} \label{tcc-conjecture}
Every graph $G$ has a total $(\Delta(G)+2)$-coloring, i.e. $\chi''(G) \leq (\Delta(G)+2)$, where $\Delta(G)$ is the maximum degree of $G$.
\end{conjecture}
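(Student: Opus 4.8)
The plan is to argue by contradiction through a minimal counterexample. Suppose Conjecture~\ref{tcc-conjecture} fails, and let $G$ be a counterexample minimizing $|V(G)|+|E(G)|$; then $\chi''(G)\geq \Delta(G)+3$ while every proper subgraph of $G$ admits a total $(\Delta+2)$-coloring. I would first dispose of the low-degree regime, where the conjecture is classically settled: $\Delta(G)\leq 5$ is known (Kostochka and others), so I may assume $\Delta:=\Delta(G)\geq 6$ and, by minimality, that $G$ is connected and $2$-connected. The engine of the proof is the \emph{extension} principle: take a small subgraph $H$, color $G-H$ by minimality, and attempt to extend the coloring across $H$ using the $\Delta+2$ palette. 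A vertex $v$ of degree $d(v)$ together with its incident edges occupies $d(v)+1$ elements, each of which sees at most $2\Delta$ others, so whenever the local element count is small relative to $\Delta+2$ there is room to recolor; this yields a catalogue of \emph{reducible configurations} that cannot appear in $G$.

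The core of the argument is to show that enough reducible configurations are forbidden to contradict the existence of $G$. Concretely, I would establish structural lemmas bounding the minimum degree from below and ruling out sparse local patterns (adjacent low-degree vertices, small faces surrounded by low-degree vertices, and the like), each proved by the extension/recoloring technique above, often combined with Vizing-style alternating-chain (Kempe) swaps to free a color at the obstructed element. For planar $G$ these forbidden patterns feed into a \emph{discharging} scheme: assign to each vertex the charge $d(v)-6$ and to each face the charge $\ell(f)-6$, so that Euler's formula $\sum_{v} (d(v)-6)+\sum_{f} (\ell(f)-6) = -12$ forces the total charge to be negative; redistributing charge along carefully chosen rules so that every element ends with nonnegative charge then produces the desired contradiction. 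This is exactly the mechanism that proves the conjecture for planar graphs with $\Delta\geq 7$, and the present paper adapts it to the delicate $\Delta=6$ case after excluding the mushroom, the tent, and the cone.

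The hard part, and the reason the statement remains a conjecture rather than a theorem, is twofold. First, for planar graphs at $\Delta=6$ the charge $d(v)-6$ assigns $0$ to the ubiquitous degree-$6$ vertices and to hexagonal faces, so the discharging has almost no slack; the $4$-fan and the three configurations named in the abstract are precisely the obstructions whose reducibility is not known in general, which is why one is forced either to forbid them or to invent a genuinely new colorability argument. Second, and more fundamentally, the discharging method is intrinsically tied to Euler's formula and hence to planarity: for \emph{arbitrary} graphs there is no analogous global density identity, so the reducibility lemmas alone cannot be leveraged into a contradiction, and no substitute is currently available (the additive-constant bound $\chi''\leq \Delta+C$ of Molloy and Reed does not reach $\Delta+2$). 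For these reasons I expect any complete proof to require a fundamentally new idea beyond the minimal-counterexample-plus-discharging template, so the realistic scope of the techniques sketched here is the restricted planar result pursued in this paper rather than the full conjecture.
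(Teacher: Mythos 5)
You are attempting to ``prove'' the Total Coloring Conjecture itself, and here the comparison is necessarily lopsided: the paper contains no proof of Conjecture~\ref{tcc-conjecture} --- it is stated as an open conjecture of Behzad and Vizing, and what the paper actually proves is the restricted \autoref{thm:theorem1} (planar, $\Delta=6$, with the mushroom, tent, and cone forbidden). Your proposal, to your credit, is honest about this: your final paragraph concedes that the minimal-counterexample-plus-reducibility-plus-discharging template only reaches the restricted planar result, and that concession is correct. The machinery you sketch is indeed essentially the machinery of the paper's proof of \autoref{thm:theorem1} --- a counterexample $H$ minimizing $|V(H)|+|E(H)|$, extension/recoloring arguments giving reducible configurations (the paper's Lemmas 3.1--3.6 and the Claims of Section 4), then discharging against Euler's formula. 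But as a proof of the conjecture the proposal has an unfixable gap, which you yourself name: discharging is tied to planarity through Euler's formula, so nothing in the sketch even addresses general graphs; and even within planar $\Delta=6$, the reducibility of the remaining configurations is precisely what is unknown, which is why the paper must forbid three substructures rather than prove TCC outright. So the verdict is that this is not a proof, could not be completed along these lines, and the student knows it --- the text is a (largely accurate) survey of why the problem is hard, not an argument.

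Two technical slips within the sketch are worth flagging, since they would matter if you tried to execute it even in the planar case. First, your Euler identity is wrong as written: with charges $d(v)-6$ on vertices and $\ell(f)-6$ on faces, the total is $\sum_v\bigl(d(v)-6\bigr)+\sum_f\bigl(\ell(f)-6\bigr)=4|E|-6|V|-6|F|=-12-2|E|$, not $-12$; the standard correct normalizations are $\sum_v\bigl(d(v)-6\bigr)+\sum_f\bigl(2\ell(f)-6\bigr)=-12$, or the one the paper uses, $\sum_v\bigl(d(v)-4\bigr)+\sum_f\bigl(d(f)-4\bigr)=-8$ (i.e.\ $\lambda=\mu=\tfrac14$ in the parametrized identity of Section 5). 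The sign of the total happens to survive your error, but the per-element arithmetic of any discharging rules would not. Second, ``by minimality $G$ is $2$-connected'' is not automatic for total colorings: gluing colorings of two blocks at a cut vertex $v$ requires permuting the palette of one block so that the vertex colors at $v$ agree and the edge colors at $v$ are disjoint; this does work (since $d(v)+1\le\Delta+1<\Delta+2$ leaves room), but it needs to be said, and the paper sidesteps the issue entirely by deleting a single edge or vertex and invoking property (4) of the minimal counterexample rather than decomposing into blocks.
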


TCC has been verified for graphs with a maximum degree of less than six. The case of graphs with a maximum degree of less than three is straightforward.  Rosenfeld \cite{rosenfeld1971total} and Vijayaditya \cite{vijayaditya1971total} explored the case for graphs with a maximum degree of three, while Kostochka \cite{kostochka1977total} and \cite{kostochka1996total} provided proofs for graphs with maximum degrees of four and five.

A powerful method employed to demonstrate TCC for planar graphs is the well-established discharging technique. This technique involves examining reducible configurations and formulating intricate discharging rules. The TCC has been successfully verified for planar graphs with a maximum degree of seven \cite{sanders1999total}, for those with a maximum degree of eight \cite{yap2006total}, and for graphs with a maximum degree of at least nine \cite{borodin1989total}. Therefore, the only remaining unresolved case of TCC occurs when the maximum degree is equal to six. In this case, the problem becomes more complex. Several researchers have investigated the conjecture's validity under specific conditions. Let $G$ be a planar graph with $\Delta(G) = 6$. Wang et al. \cite{wang2007total} demonstrated that $G$ is totally 8-colorable if it does not contain any 4-cycle. Sun et al. \cite{sun2009total} enhanced this finding by showing that TCC holds for $G$ if it does not contain two triangles sharing a common edge. Roussel \cite{roussel2011local} provided a stronger assertion, indicating that if every vertex $v$ of $G$ is missing some $k_v$-cycle, where $k_v \in \{3,4,5,6,7,8\}$, then TCC is valid. Zhu and Xu \cite{zhu2017sufficient} further refined the results by establishing that TCC is valid if $G$ does not contain any subgraph isomorphic to a 4-fan. For in-depth information on total coloring, please refer to a survey \cite{geetha2023total}.

This paper examines planar graphs with a maximum degree of six that may contain subgraphs isomorphic to a 4-fan. By introducing a novel reducible substructure (outlined in \autoref{sec:newproperties}) and formulating more intricate discharging rules (discussed in \autoref{sec:discharging}), we improve the results presented in \cite{zhu2017sufficient} and establish the following theorem. 

\begin{theorem}\label{thm:theorem1}
    Let $G$ be a planar graph with maximum degree $6$. If $G$ does not contain a subgraph that is isomorphic to a mushroom $($as shown in \autoref{fig:mushroom}$)$, a tent $($as shown in \autoref{fig:tent}$)$, or a cone $($as shown in \autoref{fig:cone}$)$,
    then $G$ is totally $8$-colorable.

    \begin{figure}[ht]\qquad\qquad
        \centering
        \label{fig:condition}
        \begin{subfigure}[b]{0.21\linewidth}
            \centering
            \includegraphics[width=0.8\linewidth]{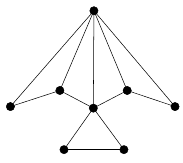}
            \caption{A mushroom}
            \label{fig:mushroom}
        \end{subfigure}\qquad\qquad
        \begin{subfigure}[b]{0.21\linewidth}
            \centering
            \includegraphics[width=0.85\linewidth]{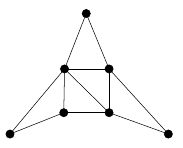}
            \caption{A tent}
            \label{fig:tent}
        \end{subfigure}\qquad\qquad
              \begin{subfigure}[b]{0.13\linewidth}
            \centering
            \includegraphics[width=0.83\linewidth]{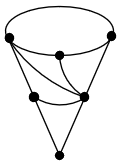}
            \caption{A cone}
            \label{fig:cone}
        \end{subfigure}\qquad\qquad\qquad
        \caption{ Three forbidden graphs}
    \end{figure}
\end{theorem}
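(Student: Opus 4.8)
The plan is to argue by contradiction through a minimal counterexample combined with the discharging method, which is the standard framework for total-coloring results on planar graphs. Suppose the theorem fails, and let $G$ be a counterexample minimizing $|V(G)| + |E(G)|$: a planar graph with $\Delta(G) = 6$, containing no mushroom, tent, or cone, yet not totally $8$-colorable. By minimality every proper subgraph of $G$ is totally $8$-colorable, and I would first record elementary consequences — that $G$ is connected and has no vertices of degree $1$, with tight control on how degree-$2$ and degree-$3$ vertices may sit in $G$ — since any such small-degree vertex can be deleted and the coloring extended by a greedy/counting argument, the number of colors forbidden on the reinserted element staying below $8$.

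The core of the argument is to establish a list of reducible configurations, which is the content of \autoref{sec:newproperties}. For each candidate configuration, I would remove a carefully chosen vertex or edge, invoke minimality to obtain a total $8$-coloring of the smaller graph, and then show this coloring can be extended — directly when enough colors remain free at the uncolored element, and otherwise via Kempe-chain recolorings that swap two colors along a bichromatic path to liberate a color. The novelty, and the reason the hypothesis is weaker than forbidding every $4$-fan as in \cite{zhu2017sufficient}, is that these reductions must go through \emph{without} excluding $4$-fans outright: instead, whenever a local fan- or wheel-like structure would block the extension, the absence of a mushroom, tent, or cone is precisely what rules out the obstruction. I would therefore phrase each reducibility lemma so that its proof consumes one of the three forbidden subgraphs exactly at the point where the extension could otherwise fail.

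With the reducible configurations in hand, the final contradiction comes from discharging, developed in \autoref{sec:discharging}. Assign to each vertex $v$ the initial charge $d(v) - 4$ and to each face $f$ the charge $d(f) - 4$; by Euler's formula the total charge is exactly $-8$. I would then design local redistribution rules sending charge from high-degree vertices and large faces toward triangles and low-degree vertices, and verify that after discharging every vertex and every face carries nonnegative charge. Since discharging preserves the total, this forces $-8 = \sum \text{charge} \geq 0$, a contradiction, completing the proof.

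The main obstacle I anticipate is the final charge verification around degree-$6$ vertices. Because the hypothesis permits sparse $4$-fans, $5$-fans, $5$-wheels, and $6$-wheels, a degree-$6$ vertex can be surrounded by many triangular faces, each demanding charge, so the naive estimate of how much such a vertex can safely donate is too weak. The delicate step is to show that the three forbidden subgraphs impose enough separation between these heavy local configurations that no vertex or face ends up with negative charge; making this quantitative — pinning down exactly how an over-demanding neighborhood would force a mushroom, tent, or cone — is where the bulk of the case analysis, and the real difficulty, resides.
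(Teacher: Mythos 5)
Your outline matches the paper's framework---minimal counterexample, reducible configurations proved by delete-and-recolor, and discharging from the initial charge $d(x)-4$---but as written it has two genuine gaps that are precisely where the content of the proof lives.

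First, the discharging scheme you propose (``high-degree vertices and large faces toward triangles and low-degree vertices'') cannot close the argument, and your own anticipated obstacle is the symptom. A $5$-vertex incident with five $3$-faces starts with charge $1$ and must supply at least $5\cdot\frac{1}{3}$ to its faces; a $6$-vertex incident with six $3$-faces starts with charge $2$ and, if even one of those faces is a $(4,6,6)$- or $(4,5,6)$-triangle, must supply more than $2$. No one-step rule set of the kind you describe leaves these vertices nonnegative. The paper resolves this with a second round of discharging (its rules R4--R7) in which such overloaded vertices \emph{receive} the residual charge of their neighbors after the first round, and this is exactly where the three forbidden subgraphs are consumed: the absence of a mushroom, tent, or cone guarantees that each neighbor of a $5$-wheel, $5$-fan, or $6$-wheel center is itself incident with at most two $3$-faces, hence retains a positive residual to pass along. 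Your proposal instead locates the use of the forbidden subgraphs inside the reducibility lemmas, which is not where the paper needs them and is not obviously workable.

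Second, you do not identify the new reducible configuration that makes the improvement over the $4$-fan-free result possible, namely \autoref{lem3-6}: for a $6$-vertex $v$ with a $[6,4,6]$ around it and a second $4$-neighbor, $v$ has no $3$-neighbor, no $[4,6,4,6]$ around it, and no $[4,6,6,4,6,6]$ around it. Its proof is a long chain of Kempe-style recolorings (\autoref{claim4-1} through \autoref{claim4-5}) that pins down the missing colors at $v$, $w$, $y$ and the colors on $wv$, $vy$ before deriving a contradiction in each case; none of this is forced by the generic ``delete a vertex and extend'' template. Without this lemma the case analysis for $6$-vertices with $t=0$ in the discharging verification does not go through, so the proposal as it stands is a correct plan of attack but not yet a proof.
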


\autoref{thm:theorem1} demonstrates that the TCC holds for planar graphs with a maximum degree of six, allowing for sparse 4-fan substructures. This finding builds upon the previous conclusion in \cite{zhu2017sufficient}. Our proof employs the discharging method, based on a minimal counterexample. The structure of the remainder of the paper is as follows. \autoref{sec:preliminaries} introduces the essential notations and terminologies. The proof of \autoref{thm:theorem1} is presented in \autoref{sec:properties}, where in \autoref{sec:reducible}, we examine various structural properties of a minimal counterexample to \autoref{thm:theorem1}, including established properties and new properties related to a specific vertex of degree six, while in \autoref{sec:discharging}, we outline the proof by implementing a series of discharging rules. Following this, \autoref{sec:newproperties} provides a detailed proof of the new properties of a minimal counterexample. Finally, our findings are discussed in \autoref{sec:discussion}.

\section{Preliminaries}\label{sec:preliminaries}

All graphs examined in this paper are simple. By saying a  planar graph, we assume it is embedded in the plane. Let $G$ be a planar graph. We denote the vertex set, edge set, maximum degree, and minimum degree of \( G \) as \( V(G) \), \( E(G) \), \( \Delta(G) \), and \( \delta(G) \), respectively. For a vertex \( v \in V(G) \), a vertex \( u \in V(G) \) is regarded as a \textit{neighbor} of \( v \) if the edge \( uv \) belongs to \( E(G) \). The set of all neighbors of \( v \) is denoted by \( N_G(v) \), and the cardinality of $N_G(v)$ is called the \textit{degree} of vertex \( v \), represented as \( d_G(v) \). A vertex is called a \textit{$k$-, $k^+$-, or $k^-$-vertex} if it has a degree of exactly $k$, at  least $k$, or at  most $k$, respectively. A \textit{\( k \)-neighbor} \( u \) of vertex \( v \) is a neighbor of \( v \) such that \( d_G(u) = k \). We denote the set of faces of \( G \) as \( F(G) \), and the degree of a face \( f \in F(G) \) is the number of edges incident to \( f \) (counting each cut-edge twice), denoted by \( d_G(f) \). A \( k \)-, \( k^+ \)-, \( k^- \)-face is defined as a face with degree exactly \( k \), at least \( k \), and at most \( k \), respectively. A \textit{subgraph} of a graph $G$ is a graph in which the vertex set is a subset of $V(G)$ and the edge set is a subset of $E(G)$. An \textit{induced subgraph by a subset $S\subseteq V(G)$} (or \textit{a subgraph induced by $S\subseteq V(G)$}), denoted by $G[S]$, is a subgraph of $G$ obtained by removing the vertices not in $S$ and their incident edges from $G$. A \textit{$k$-cycle} (denoted as $C_k$) and a \textit{$k$-path} (denoted as $P_k$) refer to cycles and paths of length $k$, respectively. A $3$-cycle is commonly called a \textit{triangle}.
An \textit{$(x,y,z)$-triangle} is defined as a triangle whose three vertices are designated as the $x$-vertex, $y$-vertex, and $z$-vertex.
A \textit{$k$-fan} (denoted as $F_k$) and a \textit{$k$-wheel} (denoted as $W_k$) are specific types of graphs that can be constructed by connecting a vertex $v$ to every vertex in a $k$-path $P_k$ and a $k$-cycle $C_k$, respectively. In both cases, the vertex $v$ is called the \textit{center} of the fan and the wheel.

Let $v$ be  an $\ell$-vertex of a graph $G$. If there are $k$ ($2\leq k\leq \ell$) vertices $y_1,y_2,\ldots, y_k$ in $N_G(v)$ such that $y_iy_{i+1}\in E(G)$ for $i=1,2,\ldots, k-1$ and $y_i$ is a $x_i$-vertex of $G$ for $i=1,2,\ldots, k$, then we refer to $G[\{v, y_1,y_2,\ldots, y_k\}]$ as an \textit{$[x_1,x_2,\ldots,x_k]$ around $v$} and also a \textit{$[y_1,y_2,\ldots,y_k]$ around $v$}. Particularly, if  a $[y_1,y_2,\ldots,y_k]$ around $v$ satisfies that $y_ky_1\in E(G)$, then we call it a \textit{$[y_1,y_2,\ldots,y_k]$ surrounding $v$} and also an \textit{$[x_1,x_2,\ldots,x_k]$ surrounding $v$}. Note that a $[y_1,y_2,\ldots,y_k]$ around $v$ is a $(k-1)$-fan and a $[y_1,y_2,\ldots,y_k]$ surrounding $v$ is a $k$-wheel. Given a $[y_1,y_2,\ldots,y_{k}]$ around $v$,  we refer to $y_{\frac{k+1}{2}}$ (when $k \equiv 1$  (mod 2) ) or both $y_{\frac{k}{2}}$ and $y_{\frac{k}{2}+1}$ (when $k \equiv 0$  (mod 2) ) as the \textit{middle neighbors} of $v$.

Let $\phi$ be a partial total $k$-coloring of $G$ regarding a subset $S$ of $V(G)\cup E(G)$, and $Y= \{1,2,\ldots,k\}$ is the color set of $k$ colors. Given a vertex $v\in V(G)$, we use $C_\phi(v)$ to denote the set of colors that are assigned to the edges incident with $v$, and let $\overline{C_\phi(v)}=Y\setminus C_\phi(v)$,
$C_\phi[v]=C_\phi(v)\cup \{\phi(v)\}$, and $\overline{C_\phi[v]}=Y\setminus C_\phi[v]$. For every $X \subseteq S$, let $\phi(X)=\{\phi(x)| x\in X\}$. For convenience, when $\overline{C_\phi[v]}$ contains exactly one color $c$, i.e. $\overline{C_\phi[v]}=\{c\}$, we simplify it as  $\overline{C_\phi[v]}=c$. In the following, the color set $Y$ is defined as  $Y=\{1,2,3,4,5,6,7,8\}$.

\section{Proof of \autoref{thm:theorem1}} \label{sec:properties}

For all graphs satisfying the conditions of \autoref{thm:theorem1}, we choose a counterexample, denoted by $H$, that minimizes $|V(H)|+|E(H)|$. Then, $H$ satisfies the following four basic properties.

\vspace{0.1cm}

(1) $H$ is a planar graph of maximum degree $6$;

\vspace{0.1cm}

(2) $H$ does not contain a subgraph that is
    isomorphic to a mushroom, a tent, or a cone;

\vspace{0.1cm}
    
(3) $H$  does not have a total 8-coloring;

\vspace{0.1cm}

(4) Every proper subgraph of $H$ is totally 8-colorable. Note that for any proper subgraph (denoted as $Q$) of $H$, if $\Delta(Q)\le5$, we know that $Q$ is totally 8-colorable from the existing results in \cite{kostochka1977total}, \cite{kostochka1996total}, \cite{rosenfeld1971total}, and \cite{vijayaditya1971total};  if $\Delta(Q)=6$, then $Q$ is totally 8-colorable by the minimality of $H$.

\subsection{Reducible configurations} \label{sec:reducible}
We first introduce some substructures that are reducible in $H$; that is, $H$ does not contain these subgraphs. The following five reducible substructures (\autoref{lem3-1} $\sim$ \autoref{lem3-5}) can be determined similarly to those shown in \cite{roussel2011local} and \cite{zhu2017sufficient}. Here, we omit the proofs of them.  

\begin{lemma}\label{lem3-1}
 Let $v\in V(H)$ be an arbitrary vertices. Then, $d_H(v)\geq 3$; in particular, if $d_H(v)=3$, $v$ is not incident with a triangle and $N_H(v)$ contains only $6$-vertices. 
\end{lemma}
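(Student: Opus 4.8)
The plan is to argue by contradiction against the minimality of $H$, using the standard ``delete, recolour, extend'' scheme throughout: remove the vertex $v$ from $H$, invoke property (4) to total-$8$-colour the proper subgraph $H-v$ with some $\phi$, and then show this colouring always extends to $v$ and its incident edges, contradicting property (3). For the bound $d_H(v)\ge 3$, I would suppose $d_H(v)\le 2$ and colour the (at most two) edges at $v$ one at a time and then $v$ itself. An edge $vu$ conflicts only with the $\le d_H(u)-1\le 5$ coloured edges at $u$, the colour $\phi(u)$, and the at most one already-coloured edge at $v$, i.e. with at most $7$ colours of $Y$, leaving a free colour; once the edges are coloured, $v$ forbids at most $2d_H(v)\le 4$ colours and can be coloured. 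This produces a total $8$-colouring of $H$, a contradiction.

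Next, assume $d_H(v)=3$ with $N_H(v)=\{a,b,c\}$ and fix a total $8$-colouring $\phi$ of $H-v$. For the claim that every neighbour is a $6$-vertex, suppose $d_H(c)\le 5$. Colouring $va,vb,vc$ in this order, the first two edges each forbid at most $7$ colours, while $vc$ forbids only the $\le d_H(c)-1\le 4$ coloured edges at $c$, the colour $\phi(c)$, and the two colours $\phi(va),\phi(vb)$ — at most $7$ in all — so each edge, and then $v$ (forbidding at most $6$ colours), can be coloured; contradiction. Hence $d_H(a)=d_H(b)=d_H(c)=6$.

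For the triangle part, suppose toward a contradiction that $ab\in E(H)$, so that $va,vb,vc$ must receive three distinct colours and $v$ a fourth. Since each neighbour is a $6$-vertex, each edge $vx$ ($x\in\{a,b,c\}$) has exactly $|\overline{C_\phi(x)}\setminus\{\phi(x)\}|\ge 2$ admissible colours, and by Hall's condition a choice of three distinct colours fails only when the three admissible sets coincide with a common pair $\{p,q\}$. This is precisely where the triangle edge $ab$ is indispensable: in this obstructing case I would recolour $ab$ — currently coloured some $t\notin\{p,q\}$ — with $p$, which is legal since $p$ is absent from the remaining edges at $a$ and $b$ and differs from $\phi(a),\phi(b)$. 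After the swap the admissible sets of $va,vb$ become subsets of $\{q,t\}$ while that of $vc$ stays $\{p,q\}$; and because $a,b$ are adjacent we have $\phi(a)\ne\phi(b)$, so $va$ and $vb$ cannot both be forced onto the single colour $q$. Hence three distinct edge colours are available (e.g. $vc\mapsto p$, with $\{q,t\}$ split between $va,vb$), after which $v$, still forbidding at most $6$ colours, is coloured, giving the desired contradiction.

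I expect the triangle case to be the crux. The bound $d_H(v)\ge 3$ and the all-$6$-neighbours statement are routine degree counts with slack, whereas in the triangle case the admissible lists are pinned to size exactly $2$ and must be broken by an explicit recolouring; the entire argument then turns on the single structural observation that the adjacency of $a$ and $b$ forces $\phi(a)\ne\phi(b)$, which is exactly what rules out the one remaining obstruction.
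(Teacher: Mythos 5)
Your proof is correct, and it is essentially the standard minimality-plus-extension argument that the paper itself omits for this lemma (deferring to \cite{roussel2011local} and \cite{zhu2017sufficient}): delete $v$, totally $8$-colour the proper subgraph, extend by counting, and in the one tight case --- all three lists equal to $\{p,q\}$ when $ab\in E(H)$ --- recolour $ab$ with a colour absent from both $C_\phi[a]$ and $C_\phi[b]$. The only cosmetic remark is that your final appeal to $\phi(a)\neq\phi(b)$ is superfluous: after the swap the admissible sets of $va$ and $vb$ are each exactly $\{q,t\}$ with $t\notin\{p,q\}$, so a system of distinct representatives exists outright.
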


\begin{lemma}\label{lem3-2}
Every triangle in $H$ is incident with at most one $4$-vertex. 
\end{lemma}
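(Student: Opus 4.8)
The plan is to prove \autoref{lem3-2} by contradiction using the minimality of $H$. Suppose, for the sake of contradiction, that $H$ contains a triangle $T = xyz$ that is incident with at least two $4$-vertices; say $d_H(x) = d_H(y) = 4$ (we may assume exactly two are $4$-vertices, as the case of three only makes the counting more favorable). The strategy is standard for total-coloring reducibility arguments: remove a carefully chosen edge or vertex to obtain a proper subgraph $H'$, invoke property (4) to get a total $8$-coloring $\phi$ of $H'$, and then show that $\phi$ can always be extended back to all of $H$, contradicting property (3). The most natural first move is to delete the edge $xy$ joining the two $4$-vertices, so that in $H' = H - xy$ both $x$ and $y$ drop to degree $3$; by minimality $H'$ admits a total $8$-coloring $\phi$.

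First I would set up the color-counting at the uncolored edge $xy$. In $\phi$, the edge $xy$ is the only uncolored element, and to recolor it I must avoid the colors already used on edges incident with $x$ (there are $d_H(x)-1 = 3$ of them in $H'$), the colors on edges incident with $y$ (another $3$), and the colors $\phi(x)$ and $\phi(y)$ on the two endpoints. In the worst case these forbidden colors number $3 + 3 + 2 = 8$, which would exactly exhaust the palette $Y = \{1,\ldots,8\}$, so a naive count does not immediately suffice. The key observation to exploit is the shared triangle structure: both $x$ and $y$ are adjacent to the common vertex $z$, so the edges $xz$ and $yz$ force $\phi(xz) \neq \phi(yz)$, and moreover $\phi(x), \phi(y), \phi(z)$ are pairwise distinct while each differs from the colors on its incident triangle edges. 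I would carry out a careful bookkeeping of the sets $C_\phi(x)$, $C_\phi(y)$, $\phi(x)$, and $\phi(y)$ to argue that these forbidden colors cannot all be distinct, so that $\overline{C_\phi[x]} \cap \overline{C_\phi[y]}$ is forced to contain an available color for $xy$, or else a short recoloring (swapping the color on one incident edge, or using a Kempe-type exchange along a path of two alternating colors) frees one up.

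The main obstacle I expect is precisely this tight corner case where the $3+3+2$ forbidden colors appear to be distinct and fill the palette. To break it I would use the freedom to first recolor one of the endpoints or one of the non-triangle edges at $x$ or $y$: since $x$ has degree $4$ in $H$ (degree $3$ in $H'$), its two neighbors other than $y,z$ and the vertex color $\phi(x)$ leave room to permute colors, and a Kempe chain argument on a suitable pair of colors $\{a,b\}$ (where $a \in \overline{C_\phi[x]}$ and $b \in \overline{C_\phi[y]}$) should let me recolor so that the two endpoints share a missing color. The existence of such chains is guaranteed because an $(a,b)$-colored subgraph has maximum degree $2$, so its components are paths and cycles, and $x,y$ cannot both be interior vertices of the same $(a,b)$-path in the required configuration without contradicting the low degrees forced by working in $H'$.

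Finally, after securing a valid color for $xy$, the extension yields a total $8$-coloring of $H$, contradicting property (3). I would also remark that the symmetric subcase in which the two $4$-vertices of $T$ are $x$ and $z$ (so the deleted edge $xz$ is a triangle edge sharing only the vertex $x$ with the other $4$-vertex configuration) is handled by the same deletion-and-extension scheme, and that the case of three $4$-vertices is strictly easier since deleting any triangle edge then drops two degrees simultaneously and relaxes the color count. This reduces the whole statement to the single tight counting lemma resolved by the Kempe exchange above.
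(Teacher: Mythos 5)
The paper omits the proof of \autoref{lem3-2} (it is cited as following the arguments of Roussel and Zhu--Xu), but the standard argument is exactly the reduction you start with: delete the edge $xy$ between the two $4$-vertices, obtain a total $8$-coloring $\phi$ of $H-xy$ from property (4), and extend. Your setup is therefore the right one, but the decisive step is missing and one of your claims is false. Keeping $\phi(x)$ and $\phi(y)$ in place, the $3+3+2=8$ forbidden colors for $xy$ \emph{can} all be distinct (take $C_\phi(x)=\{1,2,3\}$, $C_\phi(y)=\{4,5,6\}$, $\phi(x)=7$, $\phi(y)=8$; nothing at $z$ rules this out), so the ``careful bookkeeping'' you promise cannot show otherwise, and the whole burden of the lemma falls on the recoloring you only gesture at. Moreover, the Kempe exchange you invoke is not sound for total colorings as stated: swapping the colors $a$ and $b$ along an $(a,b)$-edge-chain may create a conflict between a recolored edge and the vertex color of one of its endpoints, so the ``maximum degree $2$'' observation about the $(a,b)$-subgraph does not by itself guarantee a legal swap.

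The clean way to finish (and the point where the triangle hypothesis actually enters) is to first erase the colors of the vertices $x$ and $y$. Then $xy$ sees only $C_\phi(x)\cup C_\phi(y)$, at most $6$ colors, so it can be colored; afterwards each of $x,y$ has at most $7$ forbidden colors, and the only obstruction is the tight case where, for every admissible choice of $\phi(xy)$, both $x$ and $y$ are left with the same single admissible color. Working out that tight case forces $C_\phi(x)\cap C_\phi(y)=\emptyset$, $\phi(N(x)\setminus\{y\})=C_\phi(y)$ and $\phi(N(y)\setminus\{x\})=C_\phi(x)$; since $z\in N(x)\cap N(y)$, the color $\phi(z)$ would have to lie in both $C_\phi(x)$ and $C_\phi(y)$, which are disjoint --- a contradiction. (An equivalent route is to recolor one of the vertices $x$ or $y$ rather than uncolor both; the same dichotomy on $\phi(z)$ appears.) You cite the common neighbor $z$ as ``the key observation'' but never extract this contradiction from it, so as written the proof does not close.
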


\begin{lemma}\label{lem3-3}
   $H$ contains no $(4,5,5)$-triangle.
\end{lemma}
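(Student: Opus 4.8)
The plan is to argue by contradiction using the minimality of $H$, via an edge-deletion and recoloring scheme. Suppose $H$ contains a $(4,5,5)$-triangle $T$ with vertices $u,v,w$, where $d_H(u)=4$ and $d_H(v)=d_H(w)=5$, and write $N_H(u)=\{v,w,u_1,u_2\}$. I would delete the edge $uv$ to obtain the proper subgraph $H'=H-uv$; by basic property $(4)$, $H'$ admits a total $8$-coloring $\phi$. Note that in $H'$ we have $d_{H'}(u)=3$ and $d_{H'}(v)=4$, so $u$ is the endpoint carrying the most slack: $|\overline{C_\phi(u)}|=5$ while $|\overline{C_\phi(v)}|=4$. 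The aim is to adjust $\phi$ near $u$ and then assign a color to $uv$, producing a total $8$-coloring of $H$ and contradicting property $(3)$.

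First I would try to color $uv$ directly: a color $c$ works if and only if $c\notin C_\phi(u)\cup C_\phi(v)$ and $c\neq\phi(u),\phi(v)$. Since the edges $uw$ and $vw$ are both incident to $w$, they receive distinct colors, and together with the slack at $u$ this constrains the forbidden set; whenever $\overline{C_\phi(u)}\cap\overline{C_\phi(v)}$ contains a color different from $\phi(u)$ and $\phi(v)$, we are finished. The remaining tight situations are those in which the only colors available to $uv$ coincide with $\phi(u)$ or $\phi(v)$, or in which $\phi(u)=\phi(v)$ already (possible, since $u$ and $v$ are non-adjacent in $H'$). In these cases I would recolor the vertex $u$: because $d_{H'}(u)=3$, there are $5$ candidates in $\overline{C_\phi(u)}$ for $\phi(u)$, and only the four vertex-colors $\phi(v),\phi(w),\phi(u_1),\phi(u_2)$ need to be avoided, so a recoloring of $u$ that unblocks $uv$ can usually be chosen, after which $uv$ receives the freed color.

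The main obstacle I expect is the genuinely tight subcase in which recoloring $u$ alone does not free a usable color for $uv$, precisely because the two degree-$5$ vertices have very little slack: in $H$ one has $|\overline{C_\phi[v]}|=|\overline{C_\phi[w]}|=2$. Here I would resort to a Kempe-chain / color-exchange argument: choose colors $\alpha,\beta$ with $\alpha$ available at $u$ and $\beta$ blocking $uv$ at $v$, swap them along the $(\alpha,\beta)$-component meeting the obstructing edge at $v$, and verify that the swap does not propagate through $w$ and re-create a conflict. Organizing these swaps into a finite, exhaustive case analysis according to which of the colors in $\{\phi(u),\phi(v),\phi(uw),\phi(vw)\}$ coincide is the delicate part; each case should terminate by exhibiting an explicit valid color for $uv$, completing the contradiction. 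This reduction is analogous to those used for the earlier reducible configurations in \cite{roussel2011local} and \cite{zhu2017sufficient}, which is consistent with the paper's remark that the statement can be established similarly.
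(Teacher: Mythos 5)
Your overall reduction---delete the edge $uv$ joining the $4$-vertex to one of the $5$-vertices, invoke the minimality of $H$ (property (4)) to totally $8$-color $H-uv$, and then extend---is the same strategy this lemma is proved by in the references the paper defers to (the paper itself omits the proof). The difficulty, however, is entirely in the extension step, and your proposal does not carry it out in the one case that is genuinely tight. Since $|C_\phi(u)\cup C_\phi(v)|\le 3+4=7$, some color $\gamma$ is always absent from both edge-neighborhoods, and the only obstruction is $\gamma\in\{\phi(u),\phi(v)\}$. When $\gamma=\phi(u)\neq\phi(v)$, recoloring $u$ does work, and in fact always works: if every candidate for $u$ other than $\gamma$ were blocked, the eight colors $C_\phi(u)$, $\gamma$, $\phi(v)$, $\phi(w)$, $\phi(u_1)$, $\phi(u_2)$ would be pairwise distinct, forcing $C_\phi(v)=\{\phi(v),\phi(w),\phi(u_1),\phi(u_2)\}$ and hence $\phi(v)\in C_\phi(v)$, contradicting properness. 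You only assert this step ``can usually be chosen''; the verification is missing but supplia­ble. The real gap is the case where the unique free color is $\gamma=\phi(v)$: recoloring $u$ then changes nothing, and $v$ cannot in general be recolored, because as a $5$-vertex of $H$ it faces up to $4+4+1=9$ constraints in $H-uv$ (four incident edge colors, four neighbor colors, plus the color $\gamma$ you intend to place on $uv$). In this situation one has $\overline{C_\phi[v]}=C_\phi(u)$ and $|\overline{C_\phi[w]}|=2$, and the lemma lives or dies on exploiting these identities.

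Your proposal handles that case only by announcing a Kempe-chain swap and promising to ``verify that the swap does not propagate through $w$.'' That promise is the missing proof. In a total coloring, exchanging $\alpha$ and $\beta$ along a component of the edges colored $\alpha$ or $\beta$ is not automatically legal: it fails at every vertex of the chain whose own color is $\alpha$ or $\beta$, and here the relevant chain runs through $v$ and $w$, exactly where the coloring is most constrained (and where the $(\alpha,\beta)$-path may close back to $u$ around the triangle). You give no argument that a safe pair $(\alpha,\beta)$ exists, no analysis of where the chain can end, and no fallback when the swap is blocked. What is needed instead (and what the cited proofs do) is an explicit finite recoloring analysis of the edges $uw$, $vw$ and the colors of $v$, $w$ using the identities above; until that analysis, or a genuinely verified Kempe argument, is supplied, the proof is incomplete.
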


\begin{lemma}\label{lem3-4}
 Every $6$-vertex in $H$ has at most four $3$-neighbors.
\end{lemma}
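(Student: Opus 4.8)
The plan is to prove \autoref{lem3-4} by contradiction using the standard minimal-counterexample strategy: suppose $H$ contains a $6$-vertex $v$ with five $3$-neighbors $u_1,\ldots,u_5$. Since $d_H(v)=6$, let the sixth neighbor be $w$ (of arbitrary degree). First I would delete from $H$ a carefully chosen set of edges or a single vertex so as to obtain a proper subgraph $H'$ that, by basic property (4), admits a total $8$-coloring $\phi$, and then argue that $\phi$ can be extended back to a total $8$-coloring of $H$, contradicting property (3). The natural choice is to remove the edges $vu_1,\ldots,vu_5$ (or, equivalently, to work with $H-\{u_1,\ldots,u_5\}$ together with their pendant edges handled separately), color the reduced graph, and reinsert the five $3$-vertices and their incident edges one at a time.

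The key counting observation driving the argument is the degree/color budget at $v$. In the final coloring, $v$ together with its six incident edges must receive seven distinct colors out of the eight available, so $v$ sees essentially all colors and has very little slack; each $3$-neighbor $u_i$, however, is almost free. A clean way to organize this is to first color $H-\{u_1,\ldots,u_5\}$ (deleting these five $3$-vertices entirely), obtaining $\phi$. In this reduced graph $v$ has degree one (only $w$ remains) or we keep more structure as needed; then I would recolor/extend. The crucial point is that each $u_i$ has degree $3$ in $H$, hence needs at most three distinct edge-colors plus a vertex color, while the only strongly constrained vertex is $v$. I would assign the five edges $vu_1,\ldots,vu_5$ five distinct colors from $\overline{C_\phi(v)}$, which is possible precisely because $v$ had few incident edges after deletion, and then color each $u_i$ and its two other incident edges greedily, since a $3$-vertex adjacent to $v$ has at most $2+2=4$ forbidden colors from its other two neighbors and their incident edges, leaving room in an $8$-color palette.

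The main obstacle I expect is the bookkeeping for the edges of the $u_i$ that are \emph{not} incident with $v$: when I reinsert $u_i$, its two remaining edges go to two other neighbors whose incident edges are already colored, and I must verify that in the worst case the total number of constraints on each of the at most three uncolored elements around $u_i$ (the vertex $u_i$ itself and its two non-$v$ edges) never exhausts all $8$ colors simultaneously. Because $d_H(u_i)=3$, the edge $u_iv$ already carries one color, and each of the two other edges is incident to at most $6-1=5$ previously colored edges at the far endpoint, I must check the inequality carefully rather than wave it away; this is where I would perform a short worst-case color-availability count, possibly handling the vertex color of $u_i$ last since it conflicts with only three edges and at most three neighbors. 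I would also need to confirm that the deletion producing $H'$ does not accidentally lower $\Delta$ below $6$ in a way that changes which reducibility results apply, but property (4) already covers both the $\Delta(Q)\le 5$ and $\Delta(Q)=6$ cases, so this is routine. The heart of the proof is therefore the extension step and its tight color count, not the reduction itself.
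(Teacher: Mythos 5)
The paper does not actually prove \autoref{lem3-4}: it is one of the five lemmas whose proofs are explicitly omitted and deferred to Roussel and to Zhu--Xu, where the standard reduction deletes a \emph{single} edge $vu_1$, takes a total $8$-coloring of the subgraph, uncolors $u_1$, and then runs a local recoloring argument (very much in the style of the paper's own \autoref{claim4-1}). Your reduction --- deleting all five $3$-vertices at once and extending greedily --- is a different route, and it has a genuine gap at precisely the step you flag but never close: the worst-case availability count for the last uncolored edge around each $u_i$ is exactly $8$, so the greedy extension can fail. Concretely, after coloring $H-\{u_1,\dots,u_5\}$ and the edges $vu_1,\dots,vu_5$, the second of the two remaining edges at $u_i$, say $u_iy$, must avoid the six colors of $C_\phi[y]$ (the vertex $y$ is a $6$-vertex by \autoref{lem3-1}, with five already-colored edges plus $\phi(y)$) together with $\phi(vu_i)$ and $\phi(u_ix)$ --- eight forbidden colors. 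Reordering does not rescue this: if you color $u_ix$ and $u_iy$ first from their lists $Y\setminus C_\phi[x]$ and $Y\setminus C_\phi[y]$, each of size only $2$, then in the case where both lists equal the same pair $\{c_1,c_2\}$ the colors on $u_ix,u_iy$ are forced to be $\{c_1,c_2\}$, so $vu_i$ is confined to the four colors $Y\setminus\{c_1,c_2,\phi(v),\phi(vw)\}$; if this happens at all five $u_i$ with a common pair, you need five distinct colors from one $4$-element list, which is impossible. Escaping such configurations requires recoloring already-colored elements (Kempe-type exchanges or the kind of case analysis in \autoref{sec:newproperties}), and that is the missing heart of the proof, not a routine inequality check.

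A second, independent red flag: your argument never uses the number five. After deleting $k$ of the $3$-neighbors, $v$ retains $6-k$ colored edges plus $\phi(v)$, leaving $k+1\ge k$ colors for the $k$ reinserted edges $vu_i$, so your step at $v$ succeeds for \emph{every} $k$, and the far-endpoint bottleneck is independent of $k$. If the greedy extension worked, the identical argument would show that no $6$-vertex of $H$ has \emph{any} $3$-neighbor, which (combined with \autoref{lem3-1}) would eliminate $3$-vertices from $H$ entirely --- far stronger than what the literature can prove. An argument for ``at most four $3$-neighbors'' in which the threshold four plays no role cannot be correct as stated; the count of $3$-neighbors must enter through the recoloring step (it supplies many edges $vu_j$ whose far ends are nearly unconstrained $3$-vertices and can therefore absorb a recoloring), and that is exactly the mechanism your proposal lacks.
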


\begin{lemma}\label{lem3-5}
  Let $uvw$ be a $(4,5,6)$-triangle in $H$, where $d_H(u)=4$, $d_H(w)=5$ and $d_H(v)=6$.Then
  
\vspace{0.1cm}
$(i)$ $N_H(v)$ contains no $3$-vertex,
            
\vspace{0.1cm}          
$(ii)$ Edges $uv$ and $uw$ are incident with exactly one triangle $uvw$, and
            
\vspace{0.1cm}
$(iii)$  The $6$-vertex $v$ has at most two $4$-neighbors.
\end{lemma}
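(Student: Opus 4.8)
The plan is to prove (i), (ii), and (iii) one at a time, each by contradiction, driving every case to one of two impossibilities for $H$: either the assumed extra structure forces a copy of a mushroom, a tent, or a cone, contradicting property~(2), or it lets us build a total $8$-coloring of $H$, contradicting property~(3). The engine for the second horn is the minimality of $H$ together with property~(4): we delete the $4$-vertex $u$ (or a single edge incident with it) to obtain a proper subgraph, which is totally $8$-colorable; the real work is to extend such a coloring $\phi$ back across the deleted elements. Reinserting $u$ amounts to coloring its (at most) four edges with distinct colors and then $u$ itself, and when a required color is missing we try to create one by recoloring along an alternating (Kempe) chain.

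For part~(i), suppose $v$ has a $3$-neighbor $z$. By \autoref{lem3-1}, $z$ lies on no triangle and all three neighbors of $z$ are $6$-vertices, so $z\notin\{u,w\}$; I would first test whether $z$, the triangle $uvw$, and the remaining neighbors of $v$ already contain one of the forbidden graphs. If not, delete $u$, take a total $8$-coloring $\phi$ of $H-u$, and reinsert $u$. Using the notation of \autoref{sec:preliminaries}, the edge $uv$ needs a color in $\overline{C_\phi[v]}$, which may be tight since $d_H(v)=6$; the decisive point is that $z$ is a $3$-vertex, so by \autoref{lem3-1} the edge $vz$ is cheaply recolorable, and recoloring $vz$ frees a color in $\overline{C_\phi[v]}$ that $uv$ can use. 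Completing the remaining edges at $u$ and then $u$ itself yields a total $8$-coloring of $H$, the desired contradiction.

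Parts~(ii) and~(iii) use the same template with different extra structure. For~(ii), if some edge (say $uv$) lies on a second triangle $uvt$ with $t\neq w$, then $w$ and $t$ are neighbors of $u$ both adjacent to $v$; this dense configuration should either be a forbidden graph outright or, after deleting $u$ and $8$-coloring $H-u$, allow reinsertion of $u$: since $v$ and $w$ each shed an incident edge in $H-u$, the edges $uv$ and $uw$ start with at least two and three admissible colors respectively, giving room for a system of distinct representatives on the edges at $u$ unless a color collision --- read off together with the triangle $uvt$ --- is exactly a forbidden graph. For~(iii), assuming $v$ has a third $4$-neighbor besides $u$, I combine \autoref{lem3-2}, \autoref{lem3-3}, \autoref{lem3-4}, and part~(i) to constrain $N_H(v)$, then run the same delete-$u$-and-extend argument to reach either a coloring of $H$ or a forbidden graph.

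The hardest step is the extension when the palette at the high-degree endpoints $v$ and $w$ is exactly tight, so that the naive free color for $uv$ or $uw$ does not exist. One must then produce a color by swapping along a two-colored chain through $v$ or $w$, verify that the swap preserves the partial total coloring, and handle the case where the chain is blocked. That blocked case is precisely where the chain's endpoints, the triangle $uvw$, and the extra structure (the $3$-neighbor in~(i), the second triangle in~(ii), or the third $4$-neighbor in~(iii)) must be shown to form a mushroom, a tent, or a cone. Making this dichotomy exhaustive, and correctly matching each stuck configuration to its forbidden graph, is the crux.
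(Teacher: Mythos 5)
The paper does not actually prove \autoref{lem3-5}: it states that Lemmas \ref{lem3-1}--\ref{lem3-5} ``can be determined similarly to those shown in'' \cite{roussel2011local} and \cite{zhu2017sufficient} and omits the arguments. So there is no in-paper proof to match, but your general template --- delete $u$ from the minimal counterexample, invoke property (4) to totally $8$-color the proper subgraph, and then either extend the coloring by recoloring or exhibit a forbidden subgraph --- is the right family of argument; it is exactly the scheme the paper carries out in detail for \autoref{lem3-6} in \autoref{sec:newproperties}.

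However, what you have written is a plan, not a proof, and its load-bearing steps do not hold as stated. In part (i), the assertion that ``the edge $vz$ is cheaply recolorable, and recoloring $vz$ frees a color in $\overline{C_\phi[v]}$ that $uv$ can use'' fails twice over: recoloring $vz$ must avoid the up to $9$ colors in $C_\phi[v]\cup C_\phi[z]\cup\{\phi(z)\}$, so it is not automatic, and even when it succeeds the freed color must additionally lie outside $C_\phi(u)$ and off the other three edges at $u$, which nothing in your argument guarantees. In part (ii), the dichotomy ``forbidden subgraph or extendable'' is never made exhaustive: you never compare the configuration of two triangles on $uv$ against the actual definitions of the mushroom, tent, and cone, and the list sizes you quote for the edges at $u$ (roughly $8-d_H(x)$ for the edge $ux$, i.e.\ $2$ at $v$ and $3$ at $w$) do not by themselves satisfy Hall's condition for a system of distinct representatives together with a color for $u$ --- this tight case is precisely where the published proofs spend a page of explicit recolorings, and you defer it to ``swapping along a two-colored chain'' without verifying that any such chain exists or terminates usefully. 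Part (iii) contains no argument at all beyond an instruction to repeat the template. To turn this into a proof you would need to fix a concrete partial coloring (as the paper does in \autoref{claim4-1}, normalizing $C_h[v]$ and the colors on the edges at $u$), enumerate the finitely many ways the extension can be blocked, and resolve each one explicitly.
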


In addition to the aforementioned structural properties, we have identified several novel reducible configurations in \( H \) that are vital to the proof of \autoref{thm:theorem1}. Here, we just briefly delineate them, while the comprehensive proofs are presented in \autoref{sec:newproperties}.

\begin{lemma}\label{lem3-6}
Suppose that $H$ contains a $[6,4,6]$ around a $6$-vertex $v$. The corresponding three neighbors of $v$ are $w, u$, and $y$, where $d_H(w)=d_H(y)=6$, $d_H(u)=4$, $wu\in E(H)$, and $uy\in E(H)$. If $v$ is adjacent to a $4$-vertex other than $u$, then the following statements hold.

\vspace{0.1cm}
$(i)$  $N_H(v)$ contains no $3$-vertex.
            
\vspace{0.1cm}          
$(ii)$  $H$  contains no $[4,6,4,6]$ around $v$.
            
\vspace{0.1cm}
$(iii)$  $H$ does not contain a $[4,6,6,4,6,6]$ around $v$.
\end{lemma}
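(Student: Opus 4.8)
The plan is to prove all three parts by contradiction, exploiting the minimality of $H$ (property (4)) to produce a total $8$-coloring of a suitable proper subgraph and then contradicting property (3) by extending that coloring to $H$. Throughout I would write $u'$ for the extra $4$-neighbor of $v$ supplied by the hypothesis, and keep the available structural facts in mind: every $3$-vertex lies on no triangle and has only $6$-neighbors (\autoref{lem3-1}), together with the triangle and neighbor restrictions of \autoref{lem3-2} through \autoref{lem3-5}. The quantitative observation driving every case is the standard one: in a total $8$-coloring a vertex of degree $d$ together with its incident edges occupies $d+1$ of the $8$ colors, so a $6$-vertex has exactly one free color, while a $4$-vertex leaves three free colors and a $3$-vertex leaves four. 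Hence an edge joining $v$ to a low-degree neighbor is only lightly constrained at its far end, and such edges are exactly the ones I would recolor to liberate colors at $v$.

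For part (i), suppose $N_H(v)$ contains a $3$-vertex $t$. I would delete $t$ (equivalently, first delete its three incident edges), invoke minimality to totally $8$-color $H-t$, and then re-insert $t$ by coloring its three incident edges and $t$ itself. A direct greedy extension fails only when the color lists at the three edges of $t$ and at $t$ are too tightly interlocked; in that event I would recolor one of the edges $vu$ or $vu'$ joining $v$ to a $4$-neighbor. At $v$ the replacement color is essentially forced to be $v$'s unique free color, and because the far endpoint $u$ (or $u'$) has degree only $4$ and hence three free colors, this forced color is typically admissible there, after which the liberated color can be placed on the blocked edge at $t$. The case distinction is over which color is demanded at $vt$ and whether it is blocked at $w$, $u$, or $y$; the $[6,4,6]$ structure (the path $w$--$u$--$y$ through the $4$-vertex $u$) is precisely what guarantees a recolorable edge in the right position.

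Parts (ii) and (iii) I would handle after (i), so that $v$ may be assumed to have no $3$-neighbor. For (ii) a $[4,6,4,6]$ around $v$ supplies a second $4$-vertex two steps along the fan from $u$; for (iii) a $[4,6,6,4,6,6]$ around $v$ accounts for the whole neighborhood of $v$ and supplies three $4$-vertices in a symmetric pattern. In each case I would delete a single well-chosen edge incident to $v$ at one of the $4$-vertices, so that $v$ temporarily drops to degree $5$ and acquires a second free color, $8$-color the smaller graph, and rebuild. The abundance of $4$-neighbors means several edges at $v$ admit independent color swaps, and I would use these to realign the colors around $v$ so that the deleted edge can be re-colored. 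The three forbidden subgraphs enter exactly to close the residual bad cases: whenever the forced sequence of recolorings cannot be completed, the local adjacencies that block it coincide with one of the mushroom, tent, or cone of \autoref{fig:mushroom}, \autoref{fig:tent}, or \autoref{fig:cone}, contradicting property (2).

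The step I expect to be the main obstacle is controlling the recoloring chains at the high-degree neighbors $w$ and $y$ (and, in (iii), the two further $6$-vertices). When a color is swapped on an edge at a $4$-vertex, the swap can propagate through a Kempe-type two-color chain into the dense part of the neighborhood, and I must show either that the chain terminates without reintroducing a conflict at $v$, or that a non-terminating chain forces one of the three forbidden configurations. Organizing this analysis---tracking which color pairs can form obstructing chains and matching each unavoidable obstruction to a mushroom, tent, or cone---is where the bulk of the delicate case work lies, and it is exactly here that the hypothesis that $v$ has a $4$-neighbor in addition to $u$ is indispensable, since it supplies the extra free color needed to break an otherwise closed chain.
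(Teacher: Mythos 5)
There is a genuine gap here: your proposal is a strategy outline that defers exactly the content that constitutes the proof. The paper's argument for \autoref{lem3-6} is a single uniform setup --- leave the $4$-vertex $u$ uncolored, show (Claim 4.1) that $H$ admits a partial total $8$-coloring of $(V(H)\setminus\{u\})\cup E(H)$, and then, assuming $u$ cannot be colored, pin the coloring down to an essentially unique pattern ($\overline{C_g[v]}=5$, $\overline{C_g[w]}=8$, $\overline{C_g[y]}=6$, $g(wv)\in\{1,4\}$, plus constraints on $C_g[z]$ for the other neighbors, Claims 4.2--4.5) --- after which each of (i), (ii), (iii) is killed by explicit recolorings. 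All of the work is in those explicit chains, and your proposal supplies none of them; you yourself flag the recoloring chains as "where the bulk of the delicate case work lies." In particular, your surgery for (i) (delete the $3$-vertex and re-insert it) and for (ii)/(iii) (delete an edge at a $4$-vertex so $v$ drops to degree $5$) are different decompositions from the paper's, and each would require its own full case analysis that is not present; it is not evident that a greedy re-insertion of the $3$-vertex "fails only when" the lists are interlocked in a way one forced recoloring at $vu$ or $vu'$ repairs.

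The more substantive error is your proposed mechanism for closing the residual cases: you assert that whenever the recoloring cannot be completed, the blocking adjacencies form a mushroom, tent, or cone, contradicting property (2). That is not how this lemma works, and there is no reason to expect it to. The paper's proof of \autoref{lem3-6} never invokes property (2) at all; the three forbidden subgraphs enter only in the discharging phase (e.g., to bound the number of $3$-faces at a neighbor in Claims 3.2 and 3.3). The bad cases in \autoref{lem3-6} are closed purely by exploiting the small degree of $u$, of the hypothesized second $4$-neighbor $v_1$ (whose closed palette $C_g[v_1]$ has only five elements, which is what forces $\overline{C_g[v]}=5$ and $g(wv)\in\{1,4\}$ and eventually the contradictions $|C_g[t]|>5$ in (ii) and the color-counting on $r,t,p$ in (iii)), and of the extra $4$-vertices supplied by the configurations in (ii) and (iii). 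If you executed your plan looking for a forbidden subgraph to appear at the end of each stuck Kempe chain, you would not find one, and the proof would not close.
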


\subsection{Discharging approach}\label{sec:discharging}
We use the discharging method to prove \autoref{thm:theorem1}. Several discharging rules are set according to the properties of $H$. 
By Euler's formula, $|V(H)|-|E(H)|+|F(H)|=2$, we have
    \[
       \sum_{v \in V(H)}(d_H(v)-4) + \sum_{f\in F(H)}(d_H(f)-4) = -8 < 0
    \]
Let each element $x \in V(H) \cup F(H) $ get the  initial charge $ch(x)$: $ch(x)=d_H(x)-4$.
    It satisfies \[\sum_{x \in V(H) \cup F(H)}ch(x)<0 \]
Then, $H$ is discharged according to the following seven rules: R1, R2, R3, R4, R5, R6, R7, which are described below. 
    
    \begin{enumerate}

        \item[R1:] Each 6-vertex gives $\frac{1}{3}$ to every 3-neighbor.
        
        \item[R2:] Each 6-vertex sends $\frac{1}{3}$ to every its incident (6, 5$^+$, 5$^+$)-triangle,
                    $\frac{1}{2}$ to every incident  (4, 6, 6)-triangle,
                   and  $\frac{2}{3}$ to every incident (4, 5, 6)-triangle.
                   
        \item[R3:] Each 5-vertex sends $\frac{1}{3}$ to every its incident 3-face.    
        
        \item[R4:] Each 5-vertex incident with exactly four 3-faces obtains the remaining charges of its middle neighbor after R1 to R3.
        
        \item[R5:] Each 5-vertex incident with exactly five 3-faces obtains the remaining charges of its every neighbor after R1 to R3. 
        
        \item[R6:] Each 6-vertex incident with exactly five 3-faces obtains the remaining charges of its two middle neighbors after R1 to R3.
        
        \item[R7:] Each 6-vertex incident with exactly six 3-faces obtains the remaining charges of its every neighbor after R1 to R3.     
    \end{enumerate}
   
    Let $ch'(x)$ be the final charge of $x \in V(H) \cup F(H) $ after implementing the above discharging procedure on $H$.
    We will prove that $ch'(x)\geq 0$ for each $x \in V(H) \cup F(H)$ and 
    obtain a contradiction. 

    \begin{claim} \label{claim3-1}
     $ch'(f)\geq 0$ for every face $f \in F(H)$.
    \end{claim}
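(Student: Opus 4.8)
The plan is to split the faces by degree and observe that only $3$-faces can finish with negative charge, since rules R2 and R3 transfer charge exclusively \emph{into} triangles and no rule ever removes charge from a face. First I would dispose of the easy case: every face $f$ with $d_H(f)\geq 4$ starts with charge $ch(f)=d_H(f)-4\geq 0$, and it neither gains nor loses charge during the procedure (R1 and R4--R7 involve only vertices, while R2 and R3 affect only $3$-faces). Hence $ch'(f)=d_H(f)-4\geq 0$ immediately, and the entire content of the claim reduces to the $3$-faces, which begin with $ch(f)=-1$ and must therefore accumulate a total of at least $1$.

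Next I would pin down the admissible degree-types of a triangle $f=xyz$. By \autoref{lem3-1} every vertex has degree at least $3$ and no $3$-vertex lies on a triangle, so $d_H(x),d_H(y),d_H(z)\in\{4,5,6\}$. Then \autoref{lem3-2} forbids two $4$-vertices on a single triangle, and \autoref{lem3-3} rules out the $(4,5,5)$-type. Consequently the only surviving triangles are $(4,5,6)$, $(4,6,6)$, $(5,5,5)$, $(5,5,6)$, $(5,6,6)$, and $(6,6,6)$.

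Finally I would check that each surviving type collects exactly charge $1$, so that $ch'(f)=0$. For a $(4,5,6)$-triangle the incident $6$-vertex sends $\tfrac23$ by R2 and the $5$-vertex sends $\tfrac13$ by R3. For a $(4,6,6)$-triangle each of the two $6$-vertices contributes $\tfrac12$ by R2. For a triangle with no $4$-vertex, each incident $6$-vertex sees a $(6,5^+,5^+)$-triangle and sends $\tfrac13$ by R2, while each incident $5$-vertex sends $\tfrac13$ by R3; since every such triangle has three vertices of degree $5$ or $6$, the total is again $3\cdot\tfrac13=1$. In every case $ch'(f)=-1+1=0\geq 0$.

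The computation itself is routine; the only place demanding care is the enumeration step, where I must confirm that the structural lemmas leave no other triangle degree-type and that the correct subcase of R2 is matched to each type (in particular, that a triangle without a $4$-vertex always triggers the $(6,5^+,5^+)$ clause rather than one of the $4$-vertex clauses). I anticipate no genuine difficulty here, precisely because faces only ever receive charge; the delicate balancing of charge will instead appear in the subsequent vertex claims, where rules R4--R7 redistribute the leftover charge of neighbors.
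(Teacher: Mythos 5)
Your proof is correct and follows essentially the same route as the paper: dispose of $4^+$-faces immediately, use Lemmas \ref{lem3-1}--\ref{lem3-3} to restrict triangles to the $(4,5,6)$, $(4,6,6)$, and $(5^+,5^+,5^+)$ types, and verify via R2 and R3 that each receives exactly charge $1$. No gaps.
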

    
 \begin{proof}
  Clearly, $d_H(f)\geq 4$ or $d_H(f)=3$. 
     When $d_H(f)\geq 4$, $f$ does not send any charge to other elements, and hence $ch'(f)=d_H(f)-4 \geq 0$. When $d_H(f)=3$, by \autoref{lem3-1}, \autoref{lem3-2}, \autoref{lem3-3}, we can see that  $f$ is a (4, 5, 6)-triangle,
    a (4, 6, 6)-triangle, or a (5$^+$, 5$^+$, 5$^+$)-triangle. By rules R2 and R3, $ch'(f)=(3-4)+\frac{1}{3}+\frac{2}{3}=0$, 
    $ch'(f)=(3-4)+\frac{1}{2}+\frac{1}{2}=0$, or $ch'(f)=(3-4)+\frac{1}{3}+\frac{1}{3}+\frac{1}{3}=0$, respectively.
    \end{proof}

Now, we compute the final  charges of vertices. Let $v\in V(H)$ be an arbitrary vertex.  By \autoref{lem3-1}, we have $3\leq d_H(v)\leq 6$. When $d_H(v)=3$, $v$ has three 6-neighbors by \autoref{lem3-1} and  $ch'(v)=(3-4)+\frac{1}{3}+\frac{1}{3}+\frac{1}{3}=0$ by rule R1. When $d_H(v)=4$, according to the discharging rules, $v$ does not send any charge to other vertices or faces, hence $ch'(v)=4-4=0$. In the following, we consider the case of $5\leq d_H(v)\leq 6$. 
    
\begin{claim} \label{claim3-2}
$ch'(v)\geq 0$ for every $5$-vertex $v \in V(H)$.
\end{claim}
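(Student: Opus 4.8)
The plan is to begin from the initial charge $ch(v)=d_H(v)-4=1$ and follow how R1--R7 modify it. Since $v$ is a $5$-vertex, neither R1 nor R2 applies to it, and the only rule that removes charge from $v$ is R3, which sends $\frac{1}{3}$ to each incident $3$-face; writing $m$ for the number of $3$-faces incident with $v$ (so $0\le m\le 5$), the charge of $v$ after R1--R3 is $1-\frac{m}{3}$. The rules that return charge to $v$ are R4 (active exactly when $m=4$) and R5 (active exactly when $m=5$); conversely, $v$ may instead \emph{donate} its leftover charge if it serves as a middle neighbour or wheel-neighbour of a vertex invoking R4--R7. I would split the argument according to the value of $m$.

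For $m\le 3$ we already have $1-\frac{m}{3}\ge 0$, so the only thing to verify is that $v$ does not donate this leftover to two different receivers (which would push $ch'(v)$ below $0$). I would check, using the local picture around $v$ together with \autoref{lem3-1}--\autoref{lem3-3}, that $v$ can be the relevant neighbour of at most one receiving vertex; since a single donation removes exactly the leftover and leaves $ch'(v)=0$, this disposes of these cases.

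The substantive cases are $m=4$ and $m=5$, where $1-\frac{m}{3}<0$ and the deficit must be recovered. For $m=4$, the four incident $3$-faces make $v$ the centre of a $4$-fan $[y_1,y_2,y_3,y_4,y_5]$ with unique middle neighbour $y_3$, and R4 moves the leftover charge of $y_3$ onto $v$, so I must show this leftover is at least $\frac{1}{3}$. The first step is to exclude $d_H(y_3)=4$: if $y_3$ were a $4$-vertex, then \autoref{lem3-2} and \autoref{lem3-3} would force $vy_2y_3$ to be a $(4,5,6)$-triangle, yet the edge $vy_3$ also lies in the triangle $vy_3y_4$, so $vy_3$ is contained in two triangles, contradicting \autoref{lem3-5}$(ii)$. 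Hence $y_3$ is a $5^+$-vertex and the problem reduces to bounding its post-R1--R3 charge below by $\frac{1}{3}$. For $m=5$, $v$ is the centre of a $5$-wheel and R5 lets $v$ collect the leftovers of all five neighbours; here \autoref{lem3-1} rules out $3$-vertex neighbours while \autoref{lem3-2} and \autoref{lem3-3} force each $4$-neighbour to be flanked by $6$-neighbours, and I would then show the five leftovers sum to at least $\frac{2}{3}$.

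The main obstacle is precisely the lower bound on the leftover charge of the $5$- and $6$-vertex neighbours after R1--R3. For a $5$-vertex neighbour this reduces to controlling how many $3$-faces it is incident with; for a $6$-vertex neighbour $w$ one must weigh what R1 sends to its $3$-neighbours (at most four, by \autoref{lem3-4}) against what R2 sends to its incident triangles, while also using that a $w$ incident with five or six $3$-faces is itself a receiver under R6--R7 and hence donates a comfortably positive, augmented charge. This is where the finer structure enters: I expect to invoke \autoref{lem3-5} and the new \autoref{lem3-6} on $[6,4,6]$ configurations, together with the absence of the mushroom, tent and cone, to prevent a neighbour from being drained simultaneously by R1 and R2 below the threshold demanded by R4 and R5.
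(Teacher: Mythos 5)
Your overall skeleton matches the paper's: a case analysis on the number $m$ of incident $3$-faces, with $m\le 3$ immediate and the deficits $\frac{1}{3}$ (for $m=4$) and $\frac{2}{3}$ (for $m=5$) recovered through R4 and R5; your use of \autoref{lem3-5}$(ii)$ to force the middle neighbour $y_3$ to be a $5^+$-vertex is exactly the paper's step. But the part you defer as ``the main obstacle'' is precisely the step the claim cannot do without, and your sketch of how to close it points in the wrong direction. The paper closes it in one line: since $H$ contains no mushroom, tent or cone (property (2) of the minimal counterexample), the middle neighbour $y_3$ of the $4$-fan --- and, when $m=5$, every neighbour of the wheel centre --- is incident with \emph{at most two} $3$-faces, namely the two it already shares with $v$. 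Combined with the facts that $v,y_2,y_4$ are all $5^+$-vertices (so those two triangles cost only $\frac{1}{3}$ each under R2/R3, and a $6$-vertex $y_3$ has at most three $3$-neighbours, costing at most $1$ under R1), this gives a leftover of at least $1-\frac{2}{3}=\frac{1}{3}$ or $2-\frac{2}{3}-1=\frac{1}{3}$, exactly what R4 needs, and five times that in the R5 case. Without this bound the computation genuinely fails: a $6$-vertex incident with many $3$-faces can have post-R1--R3 charge $0$ or negative.

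Your proposed escape for that bad case --- that a neighbour incident with five or six $3$-faces ``is itself a receiver under R6--R7 and hence donates a comfortably positive, augmented charge'' --- does not work, for two reasons. First, R4 and R5 transfer the remaining charge \emph{after R1 to R3}, not after R6--R7, so whatever such a neighbour later receives is not available to $v$. Second, that configuration (a $4$-fan centred at a $5$-vertex whose middle neighbour carries a third incident $3$-face) is exactly what the forbidden subgraphs rule out, so the case is vacuous rather than one to be argued around; \autoref{lem3-6} plays no role here (it concerns $6$-vertices and is used only in \autoref{claim3-3}). Two smaller points: in the $m=5$ case the same \autoref{lem3-5}$(ii)$ argument you used for $m=4$ shows $v$ has \emph{no} $4$-neighbour at all (the edge from $v$ to a $4$-neighbour of the wheel would lie in two triangles), so there is no ``$4$-neighbour flanked by $6$-neighbours'' subcase; and your worry about double donation when $m\le 3$ is harmless, since a vertex donating to two distinct receivers must be incident with at least three $3$-faces, which forces its leftover to be $0$ anyway.
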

\begin{proof}  
Let $k$ be the number of 3-faces that are incident with $v$. We have $k\leq 5$.  When $k\leq 3$, according to the rule R3,  $ch'(v)\geq (5-4)-\frac{1}{3}\times 3=0$. 

When $k=4$, there is a $[v_1,v_2,v_3,v_4,v_5]$ around $v$, where $N_H(v)=\{v_1,v_2,v_3,v_4,v_5\}$ and $v_3$ is the middle neighbor of $v$. By \autoref{lem3-5} (ii), we have $d_H(v_3)\geq 5$, $d_H(v_2)\geq 5$, and $d_H(v_4)\geq 5$. By property (2) of $H$, $v_3$ is incident with at most two 3-faces. If $d_H(v_3)= 5$, according to the rules R3 and R4, we have $ch'(v)\geq(5-4)-\frac{1}{3}\times 4+(5-4-\frac{1}{3}\times 2) =0$. If $d_H(v_3)= 6$, according to the rules R1, R2, R3 and R4, we have $ch'(v)\geq(5-4)-\frac{1}{3}\times 4+(6-4-\frac{1}{3}\times 2-\frac{1}{3}\times 3) =0$. 

When $k=5$, there is a 5-wheel with center $v$. 
By \autoref{lem3-3} and \autoref{lem3-5} (ii), $v$ has no 4$^-$-neighbor, and every neighbor of $v$ is incident with at most two 3-faces by property (2) of $H$. Then,  according to the rules R1, R2, R3 and R5, we have $ch'(v)\geq 1-\frac{1}{3}\times 5+5\times \min\{(1-\frac{1}{3}\times 2), \enspace (2-\frac{1}{3}\times 2-\frac{1}{3}\times 3) \} =1$. 
\end{proof}

\begin{claim} \label{claim3-3}
$ch'(v)\geq 0$ for every $6$-vertex $v \in V(H)$.
\end{claim}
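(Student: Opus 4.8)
The plan is to bound the final charge of an arbitrary $6$-vertex $v$, whose initial charge is $ch(v)=6-4=2$, by recording every rule in which $v$ participates and then organizing the argument around $k$, the number of $3$-faces incident with $v$ (so $0\le k\le 6$). As a sender, $v$ gives $\frac{1}{3}$ to each $3$-neighbor under R1 and at most $\frac{2}{3}$ to each incident $3$-face under R2. As a \emph{center}, $v$ absorbs the post-R3 charge of its two middle neighbors when $k=5$ (R6) or of all six neighbors when $k=6$ (R7). As a \emph{donor}, $v$ may instead have all of its post-R3 charge swept up by an adjacent $5$-vertex (R4, R5) or $6$-vertex (R6, R7). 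I would write $\mathrm{surplus}(v)=2-(\text{R1+R2 outflow})$ for the charge $v$ holds after R1--R3, which is the quantity that R4--R7 redistribute.

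For $k\le 4$ the vertex $v$ is never a center, so I would simply show $\mathrm{surplus}(v)\ge 0$ by proving the R1+R2 outflow never exceeds $2$. Here the structural lemmas do the work: by \autoref{lem3-4} there are at most four $3$-neighbors, by \autoref{lem3-1} a $3$-neighbor lies on no triangle (so R1 and R2 losses are supported on disjoint parts of the link of $v$), and by \autoref{lem3-2}, \autoref{lem3-3}, \autoref{lem3-5} the expensive $(4,5,6)$- and $(4,6,6)$-triangles are limited in number because $v$ has at most two $4$-neighbors whenever a $(4,5,6)$-triangle is present. A short type-by-type tally, using \autoref{lem3-6} to rule out the densest $4$-neighbor arrangements, shows the outflow is at most $2$, with the tight configurations landing at exactly $2$. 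Consequently $ch'(v)=\mathrm{surplus}(v)\ge 0$, or $ch'(v)=0$ if $v$ happens to be a donor.

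For $k=5$ and $k=6$ the outflow can genuinely exceed $2$ (for instance several incident $(4,5,6)$-triangles), so $\mathrm{surplus}(v)$ may be negative and $v$ must recoup charge through R6 or R7. I would exhibit the $5$-fan (resp.\ $6$-wheel) around $v$, note that $v$ then has no $3$-neighbor so R1 contributes nothing, and show that the summed post-R3 charges of the two middle neighbors (resp.\ all six neighbors) outweigh the deficit. This is where the hypothesis that $H$ contains no mushroom, tent, or cone is essential: a contributing neighbor that were itself too depleted to cover its share would force one of these forbidden subgraphs around $v$, so every neighbor retains enough charge for the R6/R7 transfer to bring $v$ back to nonnegative (mirroring the $5$-vertex computation in \autoref{claim3-2}).

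The main obstacle, and the step I expect to demand the most care, is the global consistency of the R4--R7 transfers. Since a donor gives away \emph{all} of its post-R3 charge, the case analysis above is only valid once I establish two facts: (a) every donor has nonnegative surplus -- in particular a center of type $k=5$ or $k=6$, whose surplus may be negative, is never simultaneously a donor -- and (b) no vertex's surplus is claimed by two distinct centers, since the same charge cannot be transferred twice. Both reduce to showing that the relevant dense configurations cannot overlap at $v$, and this is precisely the role of the new \autoref{lem3-6}: its parts (ii) and (iii) forbid a $[4,6,4,6]$ and a $[4,6,6,4,6,6]$ around a $6$-vertex that already carries a $[6,4,6]$ together with an extra $4$-neighbor -- exactly the overlaps that would let two fans or wheels both sweep $v$ -- while part (i) removes $3$-neighbors in that delicate configuration. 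With double-claiming excluded and every donor shown to have nonnegative surplus, the per-case computations close and $ch'(v)\ge 0$ follows for all $6$-vertices.
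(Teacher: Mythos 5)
Your skeleton matches the paper's proof: start from $ch(v)=2$, bound the R1/R2 outflow by $2$ when $v$ is incident with at most four $3$-faces, and invoke the R6/R7 inflow, backed by property (2), when $v$ is incident with five or six $3$-faces. However, there is a genuine gap in how you close the dense cases, and it comes from a misreading of \autoref{lem3-6}. You assign parts (ii) and (iii) the role of preventing ``double-claiming'' between distinct centers; in the paper they do something entirely different and indispensable: they prune the degree patterns of the fan or wheel \emph{around the center $v$ itself}. The danger in your $k=5$ case is not a neighbor ``depleted'' by extra $3$-faces (that is what mushroom/tent/cone forbid), but a $4$-vertex sitting in a middle-neighbor position: a $4$-vertex carries zero charge and donates nothing under R6, and no forbidden subgraph prevents this. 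Concretely, take a $5$-fan $[4,5,6,4,6,6]$ around $v$ (two $4$-neighbors at positions $1$ and $4$). The outflow is $\frac{2}{3}+\frac{1}{3}+\frac{1}{2}+\frac{1}{2}+\frac{1}{3}=\frac{7}{3}$, while the guaranteed inflow is $0$ from the $4$-vertex middle neighbor plus at least $2-\frac{1}{3}-\frac{1}{2}-3\cdot\frac{1}{3}=\frac{1}{6}$ from the $6$-vertex middle neighbor; if that neighbor really has three $3$-neighbors, then $ch'(v)=2-\frac{7}{3}+\frac{1}{6}=-\frac{1}{6}<0$. The paper excludes exactly these placements by using \autoref{lem3-6}(ii) and (iii) (the latter read as forbidding $[4,6^-,6^-,4,6^-,6^-]$), which is why its Case~2 list collapses to $[6,4,6,6,4,6]$, $[4,5^+,5^+,5^+,5^+,4]$, and $[6,4,6,5^+,5^+,4]$, and why its Case~3 allows only $[6,4,6,5^+,5^+,5^+]$. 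Since your proposal reserves (ii)/(iii) for a different purpose, it has no mechanism to rule these configurations out, and the $k=5,6$ computations cannot close.

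Your consistency worries (a) and (b) are legitimate checks (they are needed for charge conservation, hence for the final contradiction), but they are settled by property (2), not by \autoref{lem3-6}. Every vertex claimed under R4--R7 lies on two $3$-faces through its center, and property (2) --- via the ``at most two incident $3$-faces'' statements the paper extracts from it in \autoref{claim3-2} and in Cases 2 and 3 here --- caps any claimed vertex at two incident $3$-faces in total. A vertex claimed by two distinct centers would lie on at least three $3$-faces (at most one $3$-face can contain both centers), a contradiction; likewise no R4--R7 center, being incident with at least four $3$-faces, can itself be claimed, so a negative-surplus center is never a donor. Parts (ii)/(iii) of \autoref{lem3-6} concern the degrees of $v$'s own neighbors and say nothing about $v$ being swept by another fan or wheel. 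Finally, your $k\le 4$ outflow bound of $2$ is correct, but note that this is precisely where \autoref{lem3-6}(i) and (ii) are actually consumed in the paper (excluding $3$-neighbors next to a $[6,4,6]$ with an extra $4$-neighbor, and excluding $[4,6,4,6]$), so the type-by-type tally your sketch defers is where most of the work, and most of the lemma usage, lives.
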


\begin{proof}
Consider the vertex \( v \) with exactly \( t \) 3-neighbors. By \autoref{lem3-1} and \autoref{lem3-2}, $v$ is not incident with a  ($3^-$,$6^-$,$6^-$)-triangle or a ($4^-$,$4^-$,$6^-$)-triangle. According to \autoref{lem3-4}, it follows that \( t \leq 4 \). Importantly, if \( t \geq 1 \), then vertex \( v \) cannot be part of a (4,5,6)-triangle, as indicated in \autoref{lem3-5} (i). Moreover, if \( t \geq 1 \), by \autoref{lem3-5} (ii), there cannot be a [6,4,5] around \( v \). Additionally, \autoref{lem3-6} (i) indicates that if there is a [6,4,6] around $v$, then $v$ has at most one 3-neighbor; 
if there is a [6,4,6] around $v$ and $v$ has a 3-neighbor, then $v$ has four 5$^+$-neighbors. 
\autoref{lem3-6} (ii) establishes that there cannot be a [6,4,6,4]  around  \( v \). In the subsequent analysis, we will determine \( ch'(v) \) by considering five different cases based on the value of \( t \).

When $t=4$, since no 3-vertex is incident with a triangle by \autoref{lem3-1}, $v$ is incident with at most one triangle. If $v$ is indeed incident with a triangle, it can only be a (4,6,6)-triangle or (5$^+$,5$^+$,6)-triangle. Following the guidelines outlined in rules R1 and R2, we can deduce that  $ch'(v)\geq(6-4)-\frac{1}{3}\times 4-\max\{\frac{1}{2},\enspace \frac{1}{3}\}=\frac{1}{6}$. 

When $t=3$, $v$ is incident with at most two triangles, and the neighborhood $N_H[v]$ of $v$ can only form a [4,6,4], a [4,6,5$^+$] or a [5$^+$,5$^+$,5$^+$] around $v$. According to the rules R1 and R2, we have $ch'(v)\geq(6-4)-\frac{1}{3}\times 3-\max\{(\frac{1}{2}\times 2),\enspace (\frac{1}{2}+\frac{1}{3}),\enspace (\frac{1}{3}\times 2)\}=0$.

When $t=2$, $v$ is incident with at most three triangles, and $N_H[v]$ can only form a [4,6,6,4], a [4,6,5$^+$,5$^+$] or a [5$^+$,5$^+$,5$^+$,5$^+$] around $v$. According to the rules R1 and R2, we have $ch'(v)\geq(6-4)-\frac{1}{3}\times 2-\max\{(\frac{1}{2}\times 2+\frac{1}{3}),\enspace (\frac{1}{3}\times 2+\frac{1}{2}),\enspace (\frac{1}{3}\times 3)\}=0$.

When $t=1$,  $v$ is incident with at most four triangles, and  $N_H[v]$ can only form a [6,4,6,5$^+$,5$^+$], a [5$^+$,6,4,6,5$^+$], a [4,6,5$^+$,6,4], a [4,6,5$^+$,5$^+$,5$^+$], or a [5$^+$,5$^+$,5$^+$,5$^+$,5$^+$] around $v$.
According to the rules R1 and R2, we have $ch'(v)\geq(6-4)-\frac{1}{3}-\max\{(\frac{1}{2}\times 2+\frac{1}{3}\times 2),\enspace (\frac{1}{2}\times 2+\frac{1}{3}\times 2),\enspace(\frac{1}{2}\times 2+\frac{1}{3}\times 2),\enspace (\frac{1}{2}+\frac{1}{3}\times 3),\enspace(\frac{1}{3}\times 4)\}=0$.

In the following discussion, we focus on the case where \( t = 0 \). According to \autoref{lem3-5} (iii), when vertex \( v \) is incident with a (4,5,6)-triangle, it can have at most two 4-neighbors. If \( v \) has no 4-neighbor, then $N_H[v]$ can only form a \([5^+, 5^+, 5^+, 5^+, 5^+, 5^+]\) surrounding $v$. Based on rules R2, R6, and R7, we can conclude that \( ch'(v) \geq (6 - 4) - \frac{1}{3} \times 6 = 0 \). In the case where \( v \) does have a 4-neighbor, we will analyze three additional cases depending on the number of 3-faces that are incident with \( v \).

\textit{Case 1.} $v$ is incident with at most four 3-faces. 

When there is a [6,4,6] around $v$, $N_H[v]$ can only form a [6,4,6,5,4], a [6,4,6,6,4], a [6,4,6,5$^+$,5$^+$], or a [5$^+$,6,4,6,5$^+$] around $v$.
According to the rule R2,  we have $ch'(v)\geq(6-4)-\max\{(\frac{1}{2}\times 2+\frac{1}{3}+\frac{2}{3}),\enspace (\frac{1}{2}\times 3+\frac{1}{3}),\enspace (\frac{1}{2}\times 2+\frac{1}{3}\times 2),\enspace (\frac{1}{2}\times 2+\frac{1}{3}\times 2)\}=0$.
 
When there is not a [6,4,6] around $v$, $N_H[v]$ can only form a  [4,5,5$^+$,5,4], a [4,6,5$^+$,6,4], a [4,5,5$^+$,6,4], a [4,5,5$^+$,5$^+$,5$^+$], a [4,6,5$^+$,5$^+$,5$^+$], or a [5$^+$,5$^+$,5$^+$,5$^+$,5$^+$] around $v$. 
According to the rule R2, we have $ch'(v)\geq(6-4)-\max\{(\frac{2}{3}\times 2+\frac{1}{3}\times 2),\enspace (\frac{1}{2}\times 2+\frac{1}{3}\times 2),\enspace (\frac{2}{3}+\frac{1}{3}\times 2+\frac{1}{2}),\enspace (\frac{2}{3}+\frac{1}{3}\times 3),\enspace (\frac{1}{2}+\frac{1}{3}\times 3),\enspace (\frac{1}{3}\times 4)\}=0$.

\textit{Case 2.} $v$ is incident with exactly five 3-faces. 
In this case,  $v$ is the center of a 5-fan. Let  $u_3$ and $u_4$ be the middle neighbors of $v$. By property (2) of $H$, $u_3$ (and $u_4$) is incident with at most two 3-faces by property (2) of $H$. In addition, 
$v$ has at most two 4-neighbors by \autoref{lem3-6} (ii).

When $v$ has exactly one 4-neighbor,  $N_H[v]$ can only form a  [4,5,5$^+$,5$^+$,5$^+$,5$^+$], a [4,6,5$^+$,5$^+$,5$^+$,5$^+$], a [6,4,6,5$^+$,5$^+$,5$^+$], or a [5$^+$,6,4,6,5$^+$,5$^+$] around $v$.  
According to the rules R2 and R6, we have $ch'(v)\geq(6-4)-\max\{(\frac{2}{3}+\frac{1}{3}\times 4),\enspace (\frac{1}{2}+\frac{1}{3}\times 4),\enspace (\frac{1}{2}\times 2+\frac{1}{3}\times 3),\enspace (\frac{1}{2}\times 2+\frac{1}{3}\times 3)\}=0$.

When $v$ has exactly two 4-neighbors, since $H$ contains no [4,6$^-$,6$^-$,4,6$^-$,6$^-$] around $v$ by \autoref{lem3-6} (iii),
$N_H[v]$ can only form a [6,4,6,6,4,6], a [4,5$^+$,5$^+$,5$^+$,5$^+$,4], or a [6,4,6,5$^+$,5$^+$,4] around $v$.
According to the rules R1, R2, R3 and R6, 
    if there is a [6,4,6,6,4,6] around $v$, we have $ch'(v)\geq (2-\frac{1}{2}\times 4-\frac{1}{3})+(2-\frac{1}{3}\times 3-\frac{1}{2}-\frac{1}{3})\times 2 =0$; if there is a [4,5$^+$,5$^+$,5$^+$,5$^+$,4] around $v$, 
    we have $ch'(v)\geq (2-\max\{\frac{2}{3},\enspace \frac{1}{2}\}\times 2-\frac{1}{3}\times 3)+ \min\{(1-\frac{1}{3}\times 2),\enspace (2-\frac{1}{3}\times 2-\frac{1}{3}\times 3)\}\times 2 =\frac{1}{3}$; if there is a [6,4,6,5$^+$,5$^+$,4] around $v$, 
    we have $ch'(v)\geq (2-\frac{1}{2}\times 2-\frac{1}{3}\times 2-\max\{\frac{2}{3},\enspace \frac{1}{2}\})+ (2-\frac{1}{2}-\frac{1}{3}-\frac{1}{3}\times 3)+ \min\{(1-\frac{1}{3}\times 2),\enspace (2-\frac{1}{3}\times 2-\frac{1}{3}\times 3)\}=\frac{1}{6}$.

\textit{Case 3.} $v$ is incident with six 3-faces.  Then, $v$ is the center of a 6-wheel, and $N_H[v]$ can only form  a [6,4,6,5$^+$,5$^+$,5$^+$] surrounding $v$ by \autoref{lem3-6} (ii) and \autoref{lem3-6} (iii). According to the rules R1, R2, R3 and R7, we have $ch'(v)\geq (2-\frac{1}{2}\times 2-\frac{1}{3}\times 4) + (2-\frac{1}{2}-\frac{1}{3}-\frac{1}{3}\times 3)\times 2 + 3\times \min\{(2-\frac{1}{3}\times 2-\frac{1}{3}\times 3),\enspace (1-\frac{1}{3}\times 2)\} =1$.
\end{proof}

Based on \autoref{claim3-1}, \autoref{claim3-2}, and \autoref{claim3-3}, we establish that $\sum_{x \in V(H) \cup F(H)} ch'(x) \geq 0$, which results in a contradiction. Consequently, there exists no minimal counterexample to \autoref{thm:theorem1}. This concludes the demonstration of \autoref{thm:theorem1}.

\section{Proof of \autoref{lem3-6}}\label{sec:newproperties}

In this section, we will present a proof for \autoref{lem3-6}. The statement of the lemma aligns with what is outlined in \autoref{lem3-6}. We will begin by demonstrating that the graph \( H \) can be assigned a partial total 8-coloring regarding the set \( (V(H) \setminus \{u\}) \cup E(H) \). Following this, we will establish the lemma by showing that this partial total 8-coloring can be expanded into a total 8-coloring of \( H \) by appropriately coloring \( u \), if \( H \) containing the configurations described in \autoref{lem3-6}. 

 Suppose that $H$ contains a $[6,4,6]$ around a $6$-vertex $v$. The corresponding three neighbors of $v$ are $w, u$, and $y$, where $d_H(w)=d_H(y)=6$, $d_H(u)=4$, $wu\in E(H)$, and $uy\in E(H)$.

\begin{claim} \label{claim4-1}
$H$ has a partial total 8-coloring regarding  $(V(H)\setminus\{u\})\cup E(H)$.  
\end{claim}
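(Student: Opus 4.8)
The plan is to realize $u$ as the single uncolored vertex: delete it, color the remainder by minimality, and then extend the coloring to the four edges incident with $u$. Let $z$ denote the fourth neighbor of $u$, so that $N_H(u)=\{v,w,y,z\}$, and set $H_1=H-u$. Then $H_1$ is a proper subgraph of $H$ with $\Delta(H_1)\le 6$, so by property (4) it admits a total $8$-coloring $\phi$. This $\phi$ colors every vertex of $H$ except $u$ and every edge of $H$ except $uv,uw,uy,uz$. Hence it suffices to assign colors to these four edges, leaving $u$ itself uncolored, so that the result is a proper partial total coloring; I would try to do this without disturbing $\phi$ whenever possible.

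Next I would reformulate the extension as a list problem. For each neighbor $x\in\{v,w,y,z\}$, the edge $ux$ may receive any color in $L(ux):=Y\setminus\bigl(C_\phi(x)\cup\{\phi(x)\}\bigr)$, and since the four edges are pairwise adjacent at $u$ they must get four distinct colors; that is, I need a system of distinct representatives (an SDR) for $\{L(uv),L(uw),L(uy),L(uz)\}$. Because $v,w,y$ are $6$-vertices, each has degree $5$ in $H_1$, so $|C_\phi(x)|=5$ and $|L(ux)|=8-6=2$ for $x\in\{v,w,y\}$; for the last edge, $|L(uz)|=8-d_H(z)\ge 2$ since $d_H(z)\le 6$. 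By Hall's theorem such an SDR exists precisely when every subfamily of the four lists has union at least as large as its size, and with all lists of size $\ge 2$ the only way this can fail is that three (or all four) of them collapse onto a common $2$-element set.

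The hard part is exactly this degenerate situation, where, say, $L(uv)=L(uw)=L(uy)=\{a,b\}$, which forces each of $v,w,y$ to use all six colors of $Y\setminus\{a,b\}$ on its incident edges and itself. To handle it I would recolor $\phi$ locally so as to separate the lists: using a maximal two-colored alternating path (a Kempe chain) in a suitable pair of colors at one of $v,w,y$, I would swap colors along the chain to free a color of $\{a,b\}$ at that vertex, thereby enlarging its list and restoring Hall's condition. The triangles $vuw$ and $vuy$ together with $\Delta(H)=6$ keep the relevant chains confined near the neighborhoods of $v,w,y,z$, so the swap does not destroy the rest of $\phi$. It is convenient to organize the argument according to $d_H(z)$ (equivalently, $|L(uz)|$), since a larger $L(uz)$ already rules out most collapse patterns, and only the case where all of $v,w,y,z$ are $6$-vertices should require the recoloring step.

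Once Hall's condition is secured, any SDR assigns $uv,uw,uy,uz$ distinct admissible colors, and together with $\phi$ this yields the desired partial total $8$-coloring of $H$ regarding $(V(H)\setminus\{u\})\cup E(H)$, proving the claim. I expect the degenerate-overlap case, and the verification that the recoloring can always be carried out within the bounded neighborhood, to be the main technical obstacle.
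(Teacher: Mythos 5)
Your decomposition is genuinely different from the paper's: the paper deletes only the edge $uv$, keeps the total coloring of everything else (erasing just the color of the vertex $u$), and then has to color the single edge $uv$; you delete the vertex $u$, color $H-u$ by minimality, and must then color all four edges $uv,uw,uy,uz$. Recasting that extension as a system of distinct representatives for the lists $L(ux)=Y\setminus C_\phi[x]$, each of size $8-d_H(x)\ge 2$, is a clean framing. Two problems, though. First, a small one: with four lists of size at least $2$, Hall's condition can also fail when the union of all four lists has size $3$ without any three of them coinciding (e.g.\ $\{a,b\},\{a,c\},\{b,c\},\{a,b\}$), so your description of the degenerate case is incomplete.

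The serious gap is that the degenerate case is where the entire difficulty of the claim lives, and you do not actually resolve it. The Kempe-chain swap you invoke is an \emph{edge}-coloring tool: in a total coloring, interchanging colors $a$ and $c$ along an $(a,c)$-alternating edge chain may place color $a$ on an edge one of whose endpoints is a \emph{vertex} colored $a$, so the swap need not yield a valid coloring, and you give no argument for why a usable pair $(a,c)$ always exists. Your confinement claim is also unsupported: alternating chains can be arbitrarily long regardless of $\Delta=6$, and a chain started at $v$ may terminate at $w$ or $y$, in which case the swap merely permutes the collapsed lists among $v,w,y$ instead of enlarging one of them. So the step you yourself flag as "the main technical obstacle" is asserted rather than proved. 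The paper sidesteps all of this by uncoloring only $uv$: it records the unique missing colors $\overline{C_h[w]}$, $\overline{C_h[y]}$, the two missing colors of $\overline{C_h[v]}$, and performs a finite, explicitly verified sequence of one-element recolorings of $uy$, $wu$, $xu$, $wv$, $vy$ in each subcase. To complete your route you would need an honest case analysis of the collapsed configurations with concrete, conflict-checked recolorings, at which point the argument is no shorter than the paper's.
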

\begin{proof}
According to the property (4) of graph \( H \), it possesses a partial total 8-coloring \( h \) regarding $V(H)\cup (E(H)\backslash\{uv\})$. We aim to modify the coloring to appropriately color the edge \( uv \) after removing the color assigned to vertex \( u \).
Let \( N_H(u) = \{w,v,y,x\} \). Without loss of generality, we can assume that \( C_h[v] = \{1,2,3,4,5,6\} \), where \( h(v) = 1 \), \( h(wv) = 6 \), and \( h(vy) = 2 \). 
We give the proof by contradiction.
Assuming that there is no available color for the edge \( uv \).  
Note that if $7$ or $8\notin\{h(wu),h(uy),h(xu)\}$, then we can color the edge $uv$ with 7 or 8 to obtain a partial total 8-coloring regarding $(V(H)\setminus\{u\})\cup E(H)$.
Therefore we have \(\{7,8\}\subseteq \{h(wu),h(uy),h(xu)\}\), \(  C_h[v] \cup C_h(u) = Y =\{1,2,3,4,5,6,7,8\}\),  and  \( \{h(wu), h(uy)\} \cap \{7,8\} \neq \emptyset \). We can see four  possibilities: $h(uy)=7$, $h(uy)=8$, $h(wu)=7$, or $h(wu)=8$. It is sufficient to consider any one of the possibilities due to symmetry or similarity. For simplicity, let’s say \( h(uy) = 7 \), \( h(xu) = a \), and \( h(wu) = b \) (as shown in \autoref{fig:Figure_2}). 
Note that $|\overline{C_h[y]}|=|\overline{C_h[w]}|=1$ and $|\overline{C_h[v]}|=2$.  Let $\overline{C_h[y]}=c$ and $\overline{C_h[w]}=c'$.
We will examine two cases: one in which \( a = 8 \) and another in which \( b = 8 \).

    \begin{figure}[htbp]
        \centering
        \begin{minipage}{0.22\linewidth}
            \centering
            \includegraphics[width=\textwidth]{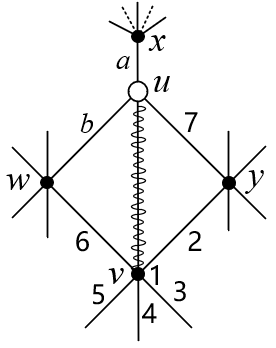}
            \caption{}
            \label{fig:Figure_2}
        \end{minipage}\qquad\qquad\qquad\qquad\qquad
        \begin{minipage}{0.22\linewidth}
            \centering
            \includegraphics[width=\textwidth]{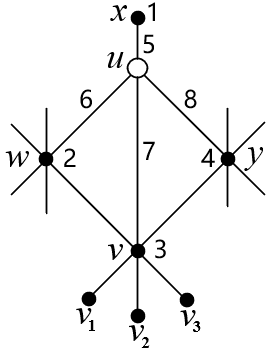}
            \caption{}
            \label{fig:Figure_3}
        \end{minipage}\qquad\quad   
    \end{figure}
  
Regarding $a=8$, we consider three cases: $c\notin \{8,b\}$, $c=b$, and $c=8$. First, $c\notin \{8,b\}$. We can recolor $uy$ with $c$ and color $uv$ with 7. Second, $c=b$. When $c'\notin \{7,8\}$, we can recolor $wu$ with $c'$, recolor $uy$ with $b$, and color $uv$ with 7. When $c'\in \{7,8\}$, we can recolor $wv$ with $c'$ and color $uv$ with 6. Third, $c=8$. When $b\neq 2$, we can recolor $vy$ with 8 and color $uv$ with 2. When $b=2$, we have $c' \in \{1,3,4,5,7,8\}$: If $c'\in \{1,3,4,5\}$, we can recolor $wu$ with $c'$, recolor $vy$ with 8, and color $uv$ with 2; if $c'\in \{7,8\}$, we can recolor $wv$ with $\overline{C_h[w]}$, and color $uv$ with 6.

Regarding $b=8$, we consider three cases: $c'\notin \{7,a\}$, $c'=7$, and $c'=a$. 
First, $c'\notin \{7,a\}$. We can recolor $wu$ with $c'$ and color $uv$ with 8. Second, $c'=7$.  When $a\neq 6$, we can recolor $wv$ with 7 and color $uv$ with 6. When $a=6$, if $c\in\{1,3,4,5\}$, we can recolor $uy$ with $c$ and recolor $uv$ with 7; if $c\in\{6,8\}$, we can recolor $vy$ with $c$, recolor $wv$ with 7 and color $uv$ with 2. Third, $c'=a$. In this case, we have that $a\in \{1,2,3,4,5\}$ and there is a color
$c_2\in \overline{C_h[x]}$ such that $c_2\in Y\setminus \{a\}$.
We recolor the edge $xu$ with $c_2$. Moreover, if $c_2\notin \{7,8\}$, we recolor $wu$ with $a$ and color $uv$ with 8. If $c_2=8$, we recolor $wu$ with $a$, recolor $wv$ with 8, and color $uv$ with 6. If $c_2=7$, we have $c\in \{1,3,4,5,6,8\}$.
When $c=6$, we can recolor $wu$ with $a$, recolor $uy$ with 6, and color $uv$ with 8.
When $c=8$, we can  recolor $wu$ with $a$, the edge $uy$ with 8, $wv$ with 8, and color $uv$ with 6.
When $c\in \{1,3,4,5\}\backslash\{a\}$, we can recolor $wu$ with $a$, recolor $uy$ with $\overline{C_h[y]}$, and color $uv$ with 8.
Finally, when $\overline{C_h[y]}=a$, we can recolor $uy$ with $a$, recolor $vy$ with 7, and color $uv$ with 2.
    So we obtain the partial total 8-coloring regarding $(V(H)\backslash\{u\})\cup E(H)$.  
\end{proof}

Let \( g \) be a partial total 8-coloring regarding \( (V(H) \setminus \{u\}) \cup E(H) \).  According to property (3) of \( H \), the partial total 8-coloring \( g \) cannot be extended to form a total 8-coloring for the entire graph \( H \). In each configuration presented in \autoref{lem3-6}, if we can assign a color to the vertex \( u \) in such a way that results in a total 8-coloring of H, this would lead to a contradiction. This contradiction indicates that the configuration can be excluded from \( H \).

 Let \( N_H(v) = \{u, w, y, v_1, v_2, v_3\} \). If \( g(N_H(u)) \cup C_g(u) \neq Y \), there is at least one available color in \( Y \) for vertex \( u \). This implies that \( H \) has a total 8-coloring, which leads to a contradiction. Therefore, we must examine the case where \( g(N_H(u)) \cup C_g(u) = Y \). Without loss of generality, we can assume the following color assignments: \( g(v) = 3 \), \( g(w) = 2 \), \( g(y) = 4 \), \( g(x) = 1 \), \( g(uv) = 7 \), \( g(wu) = 6 \), \( g(uy) = 8 \), and \( g(xu) = 5 \) (as illustrated in \autoref{fig:Figure_3}). We observe that \( \{1, 2, 3, 4\} \subseteq (C_g[v] \cap C_g[w] \cap C_g[y]) \). If this is not the case, there exists a color \( c_3 \in \{1, 2, 3, 4\} \) and a vertex \( z \in \{v, w, y\} \) such that \( c_3 \notin C_g[z] \). In this situation, we can assign the color \( g(uz) \) to vertex \( u \) and recolor the edge \( uz \) with \( c_3 \) , which would also result in a contradiction. Let $\overline{C_g[w]}=c'$ and $\overline{C_g[y]}=c$. We conclude that \( \overline{C_g[v]} \in \{5, 6, 8\} \), \( c' \in \{5, 7, 8\} \), and \( c\in \{5, 6, 7\} \). The following result (\autoref{claim4-2} and \autoref{claim4-3}) determines the range of \( \overline{C_g[v]}\) and \(g(wv) \) when  \( v \) has a 4-neighbor distinct from \( u \).

 \begin{claim}\label{claim4-2}
    If $N_H(v)\setminus \{u\}$ contains a $4$-vertex, say $v_1$, then  \( \overline{C_g[v]} = 5 \).
 \end{claim}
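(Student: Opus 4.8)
The plan is to argue by contradiction: assuming $\overline{C_g[v]} \in \{6,8\}$ (recall $\overline{C_g[v]}$ has already been narrowed to $\{5,6,8\}$), I would produce a total $8$-coloring of $H$ by assigning a color to $u$, contradicting property $(3)$. The starting point is that every color is currently blocked at $u$: the neighbors $v,w,y,x$ carry the vertex colors $\{1,2,3,4\}$ and the four incident edges carry $\{5,6,7,8\}$, so $u$ can be colored only after some recoloring frees one of $5,6,7,8$ from an edge at $u$. A single recoloring of $uv$, $wu$ or $uy$ cannot do this: recoloring $uv$ off $7$ forces its new color to equal $\overline{C_g[v]}$ (the sole missing color of $v$), which lies in $\{5,6,8\}$ and hence clashes with one of the edges $wu,uy,xu$ at $u$; symmetrically $wu$ is blocked because $\overline{C_g[w]}=c'\in\{5,7,8\}$ and $uy$ because $\overline{C_g[y]}=c\in\{5,6,7\}$. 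This is precisely where the hypothesis that $v_1$ is a $4$-vertex enters.

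Since $d_H(v_1)=4$, the vertex $v_1$ keeps at least three free colors, so the edge $vv_1$ is highly recolorable. The key move is to rotate the palette of $v$: recolor $vv_1$ with $\overline{C_g[v]}$ (after, if necessary, first repainting another edge at $v_1$ so that $\overline{C_g[v]}$ becomes admissible there), which turns the former color $\gamma_0:=g(vv_1)$ into the new missing color of $v$. If $\gamma_0\in\{1,2,4\}$, I can then recolor $uv$ with $\gamma_0$ and color $u$ with $7$, completing the coloring and yielding the contradiction. I would run this separately for $\overline{C_g[v]}=6$ (where $\gamma_0\in\{1,2,4,5,8\}$) and $\overline{C_g[v]}=8$ (where $\gamma_0\in\{1,2,4,5,6\}$).

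The main obstacle is the residual situation in which $\gamma_0$ is itself one of the edge colors $\{5,6,8\}$ of $u$, so that the color freed at $v$ still coincides with one of $xu,wu,uy$ and a single rotation does not suffice. Here I would chain a second recoloring that removes this secondary conflict: toward $x$ I would use that $x$ is subject to none of the tight palette constraints, and toward $w$ or $y$ I would combine their missing colors $c',c$ with a simultaneous exchange on $uv$, in each case drawing on the remaining slack at $v_1$ (and at $x$). The delicate points, which I expect to dominate the write-up, are $(a)$ guaranteeing that $\overline{C_g[v]}$ can actually be realized on $vv_1$ using $v_1$'s degree-$4$ freedom, and $(b)$ verifying in every sub-case that eliminating the secondary conflict never recreates a conflict elsewhere. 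The structural reason the argument should terminate only at $\overline{C_g[v]}=5$ is that $5$ is the edge of $u$ leading to the single uncontrolled neighbor $x$: when $\overline{C_g[v]}\in\{6,8\}$ the obstruction can always be pushed through the tightly constrained $6$-vertices $w,y$ and discharged, whereas $\overline{C_g[v]}=5$ leaves no such handle, which is consistent with the claimed value.
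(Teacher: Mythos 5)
Your setup is sound and agrees with the paper's: with $g(v)=3,g(w)=2,g(y)=4,g(x)=1$ and $g(uv)=7,g(wu)=6,g(uy)=8,g(xu)=5$, every color is blocked at $u$, a single recoloring of $uv$, $wu$ or $uy$ is obstructed by $\overline{C_g[v]}\in\{5,6,8\}$, $c'\in\{5,7,8\}$, $c\in\{5,6,7\}$ respectively, and the goal is to refute $\overline{C_g[v]}\in\{6,8\}$. But your central device --- recolor $vv_1$ with $\overline{C_g[v]}$ so that $\gamma_0=g(vv_1)$ becomes the missing color of $v$, then put $\gamma_0$ on $uv$ and color $u$ with $7$ --- has two genuine gaps. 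First, admissibility of $\overline{C_g[v]}$ on $vv_1$ is not guaranteed: $\overline{C_g[v]}$ may already lie in $C_g[v_1]$, and your parenthetical fix (``first repaint another edge at $v_1$'') can fail, since the free colors at $v_1$ and at the other endpoint of that edge need not intersect. Second, even granting the rotation, $\gamma_0$ ranges over all of $\{1,2,4,5,6\}$ (for $\overline{C_g[v]}=8$), and the residual cases $\gamma_0\in\{5,6\}$ are exactly where the difficulty lives; ``chain a second recoloring'' is not an argument.

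More decisively, the paper's own proof exhibits a configuration in which your scheme cannot terminate: in the sub-case $\overline{C_g[v]}=8$, $g(vy)=6$, $g(wv)=5$ one is forced into $\{g(v_1),g(v_2),g(v_3)\}=\{5,6,8\}$ and $C_g[v_1]=\{g(vv_1),5,6,7,8\}$, so every color of $\{5,6,8\}$ is blocked both on the edge $vv_1$ and as a replacement color for the vertex $v$. The escape there is not a palette rotation through $v_1$ at all: one recolors $uv$ with $g(vv_1)$, puts $3$ (the old color of the \emph{vertex} $v$) on $vv_1$, recolors $v$ with $7$, and colors $u$ with $3$. Your plan never recolors a vertex and always finishes by coloring $u$ with $7$, so it cannot produce this move; nor can the obstruction be ``pushed through $w$ or $y$'' here, since $c'\in\{7,8\}$ and $c$ are equally pinned. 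The paper instead organizes the proof as a case analysis on $g(vy)\in\{1,2,5,6\}$ and then on $c'$, $c$ and $g(wv)$, with the recolorings concentrated on the edges $wv,vy,wu,uy$, and invokes the $4$-vertex $v_1$ (via $|C_g[v_1]|\le 5$) only in the final sub-case to force $3\notin C_g[v_1]$ and $7\notin\{g(v_1),g(v_2),g(v_3)\}$. Your proposal would need to be rebuilt along those lines; as written it is a strategy with an identified failure mode rather than a proof.
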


 \begin{proof}
    Suppose that $\overline{C_g[v]} \neq  5$. Then,  $\overline{C_g[v]}\in\{6,8\}= \{g(wu),g(uy)\}$. We will demonstrate that $g$ can be extended to a total 8-coloring of $H$, leading to a contradiction.
    By symmetry or similarity, we only analyze the case where $\overline{C_g[v]}=g(uy)$, which implies $\overline{C_g[v]}=8$. We can observe that $g(vy)\in \{1,2,5,6\}$. 
    
    First, if $g(vy)\in \{1,2\}$, then we can recolor the edge $uy$ with $g(vy)$, recolor $vy$ with 8, and color the vertex $u$ with 8. 
    
    Second, if $g(vy)=5$, we have that $c'\in \{7,8,5\}$, $g(wv)\in \{1,4\}$, and $c\in \{6,7\}$. Furthermore, if $c'=7$, we can recolor $uv$ with $g(wv)$, recolor $wv$ with 7, and color $u$ with 7. If $c'=8$, we can recolor $wu$ with $g(wv)$, recolor $wv$ with 8, and color $u$ with 6.  If $c'=5$, when $c=6$, we can recolor $wu$ with $g(wv)$, the edge $wv$ with 5, $vy$ with 8, $uy$ with 6, and color $u$ with 8; when  $c=7$, we can recolor $uv$ with $g(wv)$, the edge $wv$ with 5, $vy$ with 7, and color $u$ with 7. 
    
    Third, if $g(vy)=6$, then $g(wv)\in \{1,4,5\}$, and $c\in \{5,7\}$.
    When $g(wv)\in \{1,4\}$, we can recolor $wu$ with $g(wv)$, the edge $wv$ with 6, $uy$ with 6, $vy$ with 8, and color $u$ with 8. 
    When $g(wv)=5$, we have $c'\in \{7,8\}$, and $\{g(vv_1),g(vv_2),g(vv_3)\}=\{1,2,4\}$. 
    
    Now, we show that $\{g(v_1),g(v_2),g(v_3)\}=\{5,8,6\}$ and $C_g[v_1]=\{g(vv_1),5,7,8,6\}$ (see \autoref{fig:Figure_4}). Otherwise, if $5\notin \{g(v_1),g(v_2),g(v_3)\}$, we can recolor the vertex $v$ with 5, the edge $uv$ with 3, $wv$ with $c'$, and color $u$ with 7; if $8\notin \{g(v_1),g(v_2),g(v_3)\}$, we can recolor the vertex $v$ with 8 and color $u$ with 3; if $6\notin \{g(v_1),g(v_2),g(v_3)\}$, we can recolor the vertex $v$ with 6, the edge $vy$ with $c$, $uv$ with 3, and color $u$ with 7 (We need to recolor $wv$ with $c'$ when $c=5$). Moreover, if $5\notin C_g[v_1]$, we can recolor $uv$ with $g(vv_1)$, the edge $vv_1$ with 5, $wv$ with $\overline{C_g[w]}$, and color $u$ with 7; if $7\notin C_g[v_1]$, we can recolor $uv$ with $g(vv_1)$, recolor $vv_1$ with 7, and color $u$ with 7; if $8\notin C_g[v_1]$, we can recolor $uv$ with $g(vv_1)$, recolor $vv_1$ with 8, and color $u$ with 7; if $6\notin C_g[v_1]$, we can recolor $uv$ with $g(vv_1)$, the edge $vv_1$ with 6, $vy$ with $\overline{C_g[y]}$, and color $u$ with 7 (We need to recolor $wv$ with $c'$ when $c=5$). 
    The above discussion indicates that  $7\notin \{g(v_1),g(v_2),g(v_3)\}$ and $3\notin C_g[v_1]$. Then, we can recolor $uv$ with $g(vv_1)$, recolor $vv_1$ with 3, recolor $v$ with 7, and color $u$ with 3.

    In either case, extending the \( g \) to a total 8-coloring of \( H \) is always possible, thereby resulting in a contradiction.
    \end{proof}

        \begin{figure}[htbp]
            \centering
            \qquad
            \begin{minipage}{0.21\linewidth}
                \centering
                \includegraphics[width=\textwidth]{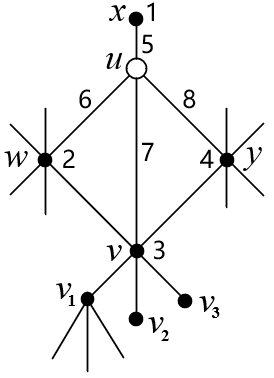}
                \caption{}
                \label{fig:Figure_4}
            \end{minipage}\qquad\qquad\qquad\qquad\qquad\qquad
            \begin{minipage}{0.21\linewidth}
                \centering
                \includegraphics[width=\textwidth]{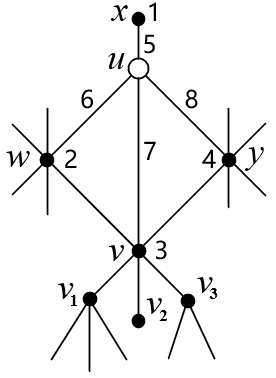}
                \caption{}
                \label{fig:Figure_5}
            \end{minipage}\qquad\qquad
        \end{figure}

 \begin{claim}\label{claim4-3}
    If $N_H(v)\setminus \{u\}$ contains a $4$-vertex, say $v_1$, then  \( g(wv) \in \{1, 4\} \).
 \end{claim}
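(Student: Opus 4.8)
The plan is to first pin down the palette at $v$ and thereby reduce the claim to excluding a single value of $g(wv)$. By \autoref{claim4-2} we already know $\overline{C_g[v]}=5$, so $C_g[v]=Y\setminus\{5\}=\{1,2,3,4,6,7,8\}$ and, since $g(v)=3$, the six edges incident with $v$ carry exactly the colors $C_g(v)=\{1,2,4,6,7,8\}$. As $g(uv)=7$, this forces $g(wv)\in\{1,2,4,6,8\}$. Now $wv$ is incident with $w$ and adjacent to $wu$, while $g(w)=2$ and $g(wu)=6$; hence $g(wv)\neq 2$ and $g(wv)\neq 6$, leaving $g(wv)\in\{1,4,8\}$. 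Thus the entire content of the claim is to rule out $g(wv)=8$.

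I would then assume for contradiction that $g(wv)=8$ and show that $g$ extends to a total $8$-coloring of $H$ by coloring $u$, contradicting property $(3)$ of $H$. The neighbours of $u$ use the colors $\{1,2,3,4\}$ and the edges at $u$ use $\{5,6,7,8\}$, so no color is directly available for $u$ and some recoloring is unavoidable. The basic move is to recolor the neighbour $v$ from $3$ to its unique missing color $5$ and then assign the vacated color $3$ to $u$: since $3\notin\{g(w),g(y),g(x)\}=\{2,4,1\}$ and $3\notin\{5,6,7,8\}$, this gives a proper coloring as long as the recoloring of $v$ is legal, i.e.\ as long as no neighbour of $v$ other than $u$ is colored $5$. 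This disposes of every case with $5\notin\{g(v_1),g(v_2),g(v_3)\}$.

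It then remains to treat the case $5\in\{g(v_1),g(v_2),g(v_3)\}$, which is the heart of the argument. Here recoloring $v$ to $5$ is blocked by the neighbour colored $5$, so instead I would build a short recoloring sequence that first removes the obstruction and only afterwards colors $u$. The resources available are: the flexibility of the $4$-vertex $v_1$, which (having degree $4$) satisfies $|\overline{C_g[v_1]}|=3$; the missing colors $c'=\overline{C_g[w]}$ and $c=\overline{C_g[y]}$, where now $c'\in\{5,7\}$ because $g(wv)=8$ puts $8\in C_g[w]$, together with $c\in\{5,6,7\}$; and the restricted palette $g(vy)\in\{1,2,6\}$ (since $vy$ avoids $g(y)=4$ and lies in $\{1,2,4,6\}$). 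Splitting on the value of $c'$ — in particular, when $c'=5$ one may first recolor $wv$ to $5$, which transfers the missing color at $v$ from $5$ to $8$ and changes which neighbour color obstructs — and then on the colors and missing colors at $v_1$, $w$, and $y$, one recolors the conflicting vertex or edge, re-establishes a free color at $v$, and finally colors $u$. Each resulting configuration is a finite verification in the exact style of the hardest subcase of \autoref{claim4-2}.

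I expect this last case to be the main obstacle: once $5\in\{g(v_1),g(v_2),g(v_3)\}$ the single missing color at $v$ is obstructed, and one must exploit the three-color slack at the $4$-vertex $v_1$ in concert with the interplay of $c'$, $c$, and $g(vy)$ to guarantee a legal extension. The delicate bookkeeping is to ensure the existence of a color that \emph{simultaneously} avoids the neighbours of $v_1$ and is admissible on the edges being recolored, which is precisely the type of argument already executed in the proof of \autoref{claim4-2}.
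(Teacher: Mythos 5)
Your setup is correct and matches the paper's: from \autoref{claim4-2} you get $\overline{C_g[v]}=5$, the adjacency/incidence constraints force $g(wv)\in\{1,4,8\}$, and the claim reduces to excluding $g(wv)=8$. Your ``basic move'' (recolor $v$ with $5$, color $u$ with $3$) is exactly the paper's first observation, and it correctly disposes of the case $5\notin\{g(v_1),g(v_2),g(v_3)\}$.

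However, there is a genuine gap: the remaining case $5\in\{g(v_1),g(v_2),g(v_3)\}$ is the entire substance of the claim, and you do not prove it --- you only assert that ``each resulting configuration is a finite verification in the exact style of \autoref{claim4-2}.'' That assertion is not self-evidently true, and the lemma would fail if even one configuration admitted no legal extension. The paper's actual argument here splits on $g(vy)\in\{1,2\}$ versus $g(vy)=6$ (not on $c'$, as you propose). The subcase $g(vy)\in\{1,2\}$ with $c\in\{5,7\}$ is handled by a two-step recoloring ($uv\mapsto g(vy)$, $vy\mapsto c$, $u\mapsto 7$), and the subcase $c=6$ needs a five-edge recoloring chain through $wv$, $vy$, $uy$, $wu$. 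The subcase $g(vy)=6$ is harder still: one must first derive, via roughly six separate recoloring arguments, that $\{g(v_1),g(v_2),g(v_3)\}=\{5,6,8\}$ and $C_g[v_1]=\{g(vv_1),5,7,6,8\}$, and only then does the degree bound $|C_g[v_1]|\le 5$ yield $7\notin\{g(v_1),g(v_2),g(v_3)\}$ and $3\notin C_g[v_1]$, enabling the final move (recolor $uv$ with $g(vv_1)$, $vv_1$ with $3$, $v$ with $7$, color $u$ with $3$). None of this structure is visible in your outline; in particular your proposed pivot ``when $c'=5$, first recolor $wv$ to $5$'' is a legal move but you give no argument that it leads anywhere, and the paper does not need it. To complete the proof you must actually exhibit a legal extension in every residual configuration; the sketch as written does not do so.
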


\begin{proof}
    By \autoref{claim4-2}, it follows that $\overline{C_g[v]}=5$. It is obvious that $g(wv)\in\{1,4,8\}$. We will show that $g$ can be extended to a total 8-coloring of $H$ if $g(wv)=8$ and obtain a contradiction. Observe that $5\in \{g(v_1),g(v_2),g(v_3)\}$; otherwise, if $5\notin \{g(v_1),g(v_2),g(v_3)\}$, $g$ can be extended to a total 8-coloring of $H$ by recoloring the vertex $v$ with 5 and coloring $u$ with 3, a contradiction. Now, we suppose $g(wv)=8$ and proceed by analyzing the value of $g(vy)$. It is easy to see that  $g(vy)\in\{1,2,6\}$.

    First, if $g(vy)\in \{1,2\}$, then $c\in\{5,7,6\}$, and $c'\in\{5,7\}$.   When $c\in\{5,7\}$, we can recolor the edge $uv$ with $g(vy)$, recolor $vy$ with $\overline{C_g[y]}$, and color $u$ with 7. 
    When $c=6$, we can recolor $wv$ with $\overline{C_g[w]}$, the edge $uv$ with $g(vy)$, $vy$ with 8, $uy$ with 6, $wu$ with 8, and color $u$ with 7.

    Second, if $g(vy)=6$, then we have $\{g(vv_1),g(vv_2),g(vv_3)\}=\{1,2,4\}$, $c\in\{5,7\}$, and $c'\in\{5,7\}$. 
    We deduce that $\{g(v_1),g(v_2),g(v_3)\}=\{5,6,8\}$ and $C_g[v_1]=\{g(vv_1),5,7,6,8\}$.
    Otherwise,  if $6\notin \{g(v_1),g(v_2),g(v_3)\}$, we can recolor the vertex $v$ with 6, recolor the edge $uv$ with 3, $vy$ with  $\overline{C_g(y)}$, and color $u$ with 7; 
    if $8\notin \{g(v_1),g(v_2),g(v_3)\}$, we can recolor the vertex $v$ with 8, recolor the edge $uv$ with 3, $wv$ with  $\overline{C_g(w)}$, and color $u$ with 7. Moreover, if $5\notin C_g[v_1]$ or $7\notin C_g[v_1]$, we can recolor $uv$ with $g(vv_1)$, recolor $vv_1$ with 5 or 7, and color $u$ with 7;
    if  $6\notin C_g[v_1]$, we can recolor $uv$ with $g(vv_1)$, the edge $vv_1$ with 6, $vy$ with $\overline{C_g(y)}$, and color $u$ with 7;
    if  $8\notin C_g[v_1]$, we can recolor $uv$ with $g(vv_1)$, the edge $vv_1$ with 8, $wv$ with $\overline{C_g(w)}$, and color $u$ with 7. 

The analysis above demonstrates that $7\notin \{g(v_1),g(v_2),g(v_3)\}$ and $3\notin C_g[v_1]$. Then, $g$ can be extended to a total 8-coloring of $H$ by the following steps: recolor $uv$ with $g(vv_1)$, recolor $vv_1$ with 3, recolor $v$ with 7, and color $u$ with 3.  This completes the proof of the claim. 
\end{proof}

By \autoref{claim4-2} and \autoref{claim4-3}, we have that 
$\overline{C_g[v]}=5$ and  $g(wv)\in\{1,4\}$. In the following, we determine the value of $\overline{C_g[w]}$ and $\overline{C_g[y]}$. 

\begin{claim} \label{claim4-4}
 $c'=8$ and $c=6$. 
\end{claim}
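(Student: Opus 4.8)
The plan is to argue by contradiction with property (3): if either $c'\neq 8$ or $c\neq 6$, I will exhibit a short sequence of recolourings of edges near $u,v,w,y$ after which some colour becomes free at $u$; colouring $u$ then extends $g$ to a total $8$-colouring of $H$, contradicting property (3). Since we already know $c'\in\{5,7,8\}$ and $c\in\{5,6,7\}$, the task is precisely to rule out $c'\in\{5,7\}$ and $c\in\{5,7\}$.

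Before doing any casework I would record a symmetry that halves it. The colour permutation $\pi=(2\,4)(6\,8)$ (fixing $1,3,5,7$) carries the normalised colouring $g$ to another valid total colouring $\pi\circ g$; interchanging the two $6$-neighbours $w$ and $y$ of $v$, the coloring $\pi\circ g$ is again in the normalised form fixed at the start of this subsection, and it keeps $\overline{C_{\pi g}[v]}=\pi(5)=5$ while the two missing colours swap, namely the missing colour at the vertex now playing the role of $w$ is $\pi(c)$ and that at the one playing the role of $y$ is $\pi(c')$. Since \autoref{claim4-2} and \autoref{claim4-3} apply verbatim to $\pi\circ g$, the statement ``$c'=8$'' for $\pi\circ g$ reads ``$\pi(c)=8$'', i.e. $c=6$, for $g$. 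Hence it suffices to prove $c'=8$, that is, to exclude $c'\in\{5,7\}$; the equality $c=6$ then follows by applying the same argument to $\pi\circ g$.

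Throughout I would use two standing facts valid for any normalised $g$. First, because $\overline{C_g[v]}=5$ (\autoref{claim4-2}), the edge $uv$ can be recoloured only with $5$ at its $v$-end, and since $5=g(xu)$ this is blocked at $u$ until $xu$ is recoloured; similarly $wv$ is essentially frozen. Second, $5\in\{g(v_1),g(v_2),g(v_3)\}$: otherwise one could recolour $v$ with $5$ (legal as $5\notin C_g(v)$ and $5$ misses every neighbour of $v$) and then colour $u$ with $3$, a contradiction. Together these show that the only genuine freedom lies at the edge $xu$, through $\overline{C_g[x]}$, and in the far neighbourhood of $w$.

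The case $c'=5$ is easy, and I would dispose of it first: as $5$ is missing both at $w$ (since $c'=5$) and at $v$ (since $\overline{C_g[v]}=5$), recolour $wv$ with $5$; this frees its old colour $g(wv)\in\{1,4\}$ at $v$, so recolour $uv$ with that colour, which frees $7$ at $u$, and finally colour $u$ with $7$. Each step is checked to respect the colouring constraints. The \emph{main obstacle} is the case $c'=7$, where both $uv$ and $wv$ are frozen, so no one- or two-step recolouring confined to $v$ works. I expect to split on $\overline{C_g[x]}$: when $\overline{C_g[x]}$ meets $\{1,2,3,4\}$ one recolours $xu$, then $uv$ with $5$, then $uw$ with $7$ (legal since $7$ misses $w$), and finally colours $u$ with $6$; the residual subcase $\{1,2,3,4,5\}\subseteq C_g[x]$ forces me to redirect the colour-$5$ edge incident with $w$ to $7$ so as to reduce to the settled case $c'=5$, with a Kempe-type exchange on the colours $5$ and $7$ when the far endpoint itself blocks $7$. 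This subcase analysis, which mirrors the recolouring chains used in \autoref{claim4-2} and \autoref{claim4-3}, is where the real work lies; once $c'=7$ is excluded we conclude $c'=8$, and $c=6$ follows from the symmetry above.
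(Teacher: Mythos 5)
There is a genuine gap, and it sits exactly where you locate ``the real work'': the case $c'=7$. Your premise that $wv$ is frozen when $c'=7$ is false, because you only consider recolouring $uv$ or $wv$ in isolation. The two edges can be recoloured \emph{simultaneously}: since $\overline{C_g[v]}=5$ and $g(wv)\in\{1,4\}$ (by \autoref{claim4-2} and \autoref{claim4-3}), the colour $g(wv)$ appears on no edge at $v$ other than $wv$ and on no edge at $u$; so recolour $uv$ with $g(wv)$, which frees $7$ at $v$, then recolour $wv$ with $c'$ (legal for $c'=7$ because $7\notin C_g[w]$, and equally for $c'=5$), and finally colour $u$ with $7$. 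This single two-edge move disposes of $c'\in\{5,7\}$ uniformly — it is the paper's argument — and needs nothing about $x$. Your detour through $\overline{C_g[x]}$ does not close: in the residual subcase $\overline{C_g[x]}\subseteq\{6,7,8\}$ every colour you could move onto $xu$ is already present at $u$, and the proposed fix of ``redirecting the colour-$5$ edge at $w$ to $7$'' via a Kempe-type $\{5,7\}$-exchange is not justified in the total-colouring setting, where vertex colours along the chain can block the swap. As written, $c'=7$ is not excluded.

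The rest of the proposal is sound and partly goes beyond the paper. Your $c'=5$ argument is the same move as above specialised to $c'=5$. The symmetry reduction — relabelling $w\leftrightarrow y$ together with the colour permutation $(2\,4)(6\,8)$, which preserves the normalisation and the hypotheses of \autoref{claim4-2} and \autoref{claim4-3}, so that ``$c'=8$'' for $\pi\circ g$ reads ``$c=6$'' for $g$ — is correct and is a genuinely different route for the second half of the claim: the paper instead excludes $c\in\{5,7\}$ by an explicit case split on $g(vy)\in\{1,2\}$ versus $g(vy)=6$ (the latter requiring a five-edge recolouring). Your symmetry buys a shorter second half, but only once the first half is actually proved.
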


\begin{proof}
Note that $c'\in  \{5,7,8\} $ and $c\in \{5,6,7\}$.
If $c'\in\{5,7\}$, we can recolor $uv$ with $g(wv)$, recolor $wv$ with $\overline{C_g[w]}$, and color $u$ with 7. This extends $g$ to a total 8-coloring of $H$, a contradiction. Therefore, $c'=8$. Then, 
we have $g(vy)\in \{1,2,6\}$.

Additionally,  when $c\in\{5,7\}$, if $g(vy)\in\{1,2\}$, we can recolor $uv$ with $g(vy)$, recolor $vy$ with $c$, and color the vertex $u$ with 7; if $g(vy)=6$, we can recolor $uv$ with $g(wv)$, recolor both $wv$ and $uy$ with 6, recolor $wu$ with 8, recolor $vy$ with $c$, and color the vertex $u$ with 7. This also extends $g$ to a total 8-coloring of $H$, a contradiction. Consequently, we have $c=6$. 
\end{proof}

According to \autoref{claim4-2}, \autoref{claim4-3}, and \autoref{claim4-4}, we have that  $\overline{C_g[v]}=5$, $\overline{C_g[w]}=8$, $\overline{C_g[y]}=6$, and $g(wv)\in\{1,4\}$. Note that $\overline{C_g[y]}=6$ also implies that $g(vy)\in\{1,2\}$. 

\begin{claim} \label{claim4-5}
Let $z \in\{v_1,v_2,v_3\}$. Then, $\{5,7\}\subseteq C_g[z]$, and when $g(vz)\in\{6,8\}$, 
 $\{g(wv),g(vy),5,7,g(vz)\}\subseteq C_g[z]$.
\end{claim}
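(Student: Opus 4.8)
The plan is to argue by contradiction in the same spirit as \autoref{claim4-2}, \autoref{claim4-3}, and \autoref{claim4-4}: since $H$ has no total $8$-coloring, the partial coloring $g$ cannot be completed by coloring $u$, so for each color $c$ that the claim places in $C_g[z]$ I would assume $c\notin C_g[z]$ and build a recoloring of $g$ that frees a color at $u$, producing a total $8$-coloring of $H$ and hence the desired contradiction. Throughout I would exploit the rigid picture already forced at $u$: its neighbours $v,w,y,x$ carry colors $3,2,4,1$ and its incident edges carry $g(uv)=7$, $g(wu)=6$, $g(uy)=8$, $g(xu)=5$, so every color is blocked at $u$; in addition $\overline{C_g[v]}=5$, $\overline{C_g[w]}=8$, $\overline{C_g[y]}=6$, $g(wv)\in\{1,4\}$ and $g(vy)\in\{1,2\}$. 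From $\overline{C_g[v]}=5$ and $g(v)=3$ the six edge colors at $v$ are $\{1,2,4,6,7,8\}$, so exactly one of $vv_1,vv_2,vv_3$ is \emph{light} (colored in $\{1,2,4\}$) and the other two are \emph{heavy} (colored $6$ and $8$). Note also that $g(vz)\in C_g[z]$ is automatic, so once $\{5,7\}\subseteq C_g[z]$ is established, the only extra content for heavy $z$ is $g(wv),g(vy)\in C_g[z]$.

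The main gadget is a swap along the edge $vz$: set $uv:=g(vz)$, recolor $vz:=c$, and color $u:=7$. Since $7$ is moved off $u$ and matches no neighbour of $u$, the only points to check are that $g(vz)$ is admissible on $uv$ at $u$, i.e.\ $g(vz)\notin\{5,6,8\}$, together with validity of $vz:=c$ at $z$ (guaranteed by $c\notin C_g[z]$) and at $v$ (guaranteed once $g(vz)$ has been vacated). This works verbatim whenever $vz$ is light. Taking $c=5$ (legitimate because $\overline{C_g[v]}=5$ forces $vz\neq 5$, so $5$ is free at $v$) and $c=7$ then gives $5,7\in C_g[z]$ for the light neighbour; the same pivot drives $g(wv),g(vy)\in C_g[z]$, since to force a light color $\beta\in\{g(wv),g(vy)\}$ into $C_g[z]$ I would first vacate $\beta$ at $v$ by recoloring the light edge $wv$ or $vy$ and then pivot $uv:=\beta$, $u:=7$.

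The hard part will be the two heavy neighbours, where $g(vz)\in\{6,8\}$ and the pivot $uv:=g(vz)$ is blocked at $u$ by $g(wu)=6$ or $g(uy)=8$. This blockage is stubborn: because $w$ misses only $8$, $y$ misses only $6$, and these are cross-linked through $wu=6$ and $uy=8$, even the swap $wu\leftrightarrow uy$ merely relabels and leaves both $6$ and $8$ present at $u$, so no edge color of $u$ in $\{6,8\}$ can be liberated. To break this I would free a \emph{neighbour} color of $u$ instead of an edge color: recolor $vz:=5$ (possible under the hypothesis $5\notin C_g[z]$ since $\overline{C_g[v]}=5$), which vacates $g(vz)\in\{6,8\}$ at $v$; then recolor the vertex $v$ with that vacated color and finally set $u:=3$, which is admissible because $3$ appears neither among the new neighbour colors of $u$ nor on any edge at $u$.

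Recoloring $v$ is valid precisely when no other neighbour of $v$ already carries the vacated color, so the argument splits according to $\{g(v_1),g(v_2),g(v_3)\}$; here I would lean on the fact, already used in \autoref{claim4-3}, that $5\in\{g(v_1),g(v_2),g(v_3)\}$, and handle the residual cases—where the vacated color $6$ or $8$ reappears on another $v_i$—by a further chain that reroutes through the unique missing colors $\overline{C_g[w]}=8$ and $\overline{C_g[y]}=6$. Replacing the target $5$ by $7$, or by a light color $g(wv),g(vy)$, and running the same heavy analysis then yields the remaining memberships. I expect the bookkeeping of these residual heavy subcases—keeping every recolored edge simultaneously valid at $v$ and at its far endpoint while $6$ and $8$ stay locked around $u$—to be the only genuine difficulty, the light cases being immediate from the pivot.
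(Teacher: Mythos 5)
Your light case ($g(vz)\in\{1,2,4\}$: pivot $uv:=g(vz)$, $vz:=5$ or $7$, $u:=7$) is exactly the paper's argument and is fine. The genuine gap is in the heavy case $g(vz)\in\{6,8\}$, which is the actual content of the claim. Your gadget there --- recolor $vz$ with $5$, recolor the \emph{vertex} $v$ with the vacated heavy color, and color $u$ with $3$ --- is only legal when $g(vz)\notin\{g(v_1),g(v_2),g(v_3)\}$; you acknowledge this residual case but never resolve it, and the fact $5\in\{g(v_1),g(v_2),g(v_3)\}$ does not exclude it (e.g.\ $\{g(v_1),g(v_2),g(v_3)\}=\{5,6,8\}$ is precisely the configuration the later proofs of \autoref{lem3-6} must contend with). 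The missing idea is the paper's cascade through $y$ or $w$: since $\overline{C_g[y]}=6$ and $\overline{C_g[w]}=8$, one can shift $g(vz)$ onto $vy$ (resp.\ $wv$), push $g(vy)$ onto $uy$ (resp.\ $g(wv)$ onto $wu$), and thereby unlock the color $8$ (resp.\ $6$) at $u$; concretely, for $5\notin C_g[z]$ one recolors $uy$ with $g(vy)$, $vy$ with $g(vz)$, $vz$ with $5$, and colors $u$ with $8$. This never touches the vertex color of $v$ and is independent of $\{g(v_1),g(v_2),g(v_3)\}$, so no residual case arises.

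Two further steps of your plan also fail as stated. First, ``replacing the target $5$ by $7$ and running the same heavy analysis'' does not work, because $vz:=7$ is blocked at $v$ by $g(uv)=7$; one must first move $7$ off $uv$, which the paper does via $uv:=g(vy)$, $vy:=6$, $vz:=7$, $u:=7$ when $g(vz)=6$, and symmetrically through $w$ (using $\overline{C_g[w]}=8$) when $g(vz)=8$. Second, your recipe for forcing $g(wv),g(vy)\in C_g[z]$ in the heavy case (``vacate $\beta$ at $v$ by recoloring $wv$ or $vy$, then pivot $uv:=\beta$, $u:=7$'') never exhibits a legal replacement color for $wv$ (note $\overline{C_g[w]}=8$ while $8$ already sits on an edge at $v$, and $5$ is present at $w$), and as written it does not even use the hypothesis $\beta\notin C_g[z]$. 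The correct move is again a cascade that recolors $vz$ itself: recolor both $vz$ and $wu$ with $g(wv)$, recolor $wv$ with $g(vz)$, and color $u$ with $6$ (and the mirror version through $y$ for $g(vy)$). So the proposal identifies the obstruction correctly but does not supply the recoloring chains that constitute the proof.
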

\begin{proof}
It is clear to see that $\{g(vv_1),g(vv_2),g(vv_3),g(vy),g(wv)\}=\{1,2,4,6,8\}$. For any $z \in\{v_1,v_2,v_3\}$, we deal with two cases: $g(vz)\in\{1,2,4\}$ and $g(vz)\in\{6,8\}$.  
First, $g(vz)\in\{1,2,4\}$.  If $5\notin C_g[z]$ or $7\notin C_g[z]$, we can recolor $uv$ with $g(vz)$, recolor $vz$ with 5 or 7, and color $u$ with 7. Second, $g(vz)\in\{6,8\}$. In this case, if $5\notin C_g[z]$, we can recolor $uy$ with $g(vy)$, the edge $vy$ with $g(vz)$, $vz$ with 5, and color $u$ with 8. If $7\notin C_g[z]$, when $g(vz)=6$, we can recolor $uv$ with $g(vy)$, the edge $vy$ with 6, $vz$ with 7, and color $u$ with 7; when $g(vz)=8$,  we can recolor $uv$ with $g(wv)$, the edge $wv$ with 8, $vz$ with 7, and color $u$ with 7. In either case, we can obtain a total 8-coloring of $H$ based on $g$, a contradiction. Therefore, $\{5,7\}\subseteq (C_g[v_1]\cap C_g[v_2]\cap C_g[v_3])$. 

Additionally, when $g(vz)\in\{6,8\}$, if $g(wv)\notin C_g[z]$, we can recolor both $vz$ and $wu$ with $g(wv)$, recolor $wv$ with $g(vz)$, and color $u$ with 6; if $g(vy)\notin C_g[z]$, we can recolor both $vz$ and $uy$ with $g(vy)$, recolor $vy$ with $g(vz)$, and color $u$ with 8. This also obtains a total 8-coloring of $H$ based on $g$, a contradiction. Therefore, $\{g(wv),g(vy),5,7,g(vz)\}\subseteq C_g[z]$. 
\end{proof}

Now, we turn to proving the three statements in \autoref{lem3-6} based on \autoref{claim4-2}, \autoref{claim4-3}, \autoref{claim4-4}, and \autoref{claim4-5}.

\textit{Proof of \autoref{lem3-6} (i).} Suppose to the contrary that $v$ has a 3-neighbor, say $v_3$ (see \autoref{fig:Figure_5}). Observe that $\overline{C_g[v]}=5$, $\overline{C_g[w]}=8$, $\overline{C_g[y]}=6$,
$g(wv)\in \{1,4\}$, $g(vy)\in \{1,2\}$, $\{g(wv),g(vy),g(vv_1), g(vv_2),g(vv_3)\}=\{1,2,4,6,8\}$, 
$\{5,7\}\subseteq C_g[v_1]$, and $\{5,7\}\subseteq C_g[v_3]$. 
We erase the color of $v_3$. Since $d_H(v_3)=3$ and $|g(N_H(v_3)) \cup C_g(v_3)|\le 6<8$, there is always an available color for $v_3$ to be recolored finally. So, we can ignore the color of $v_3$. 
 
We consider two cases based on the value of $g(vv_3)$: $g(vv_3)\in\{6,8\}$ and $g(vv_3)\in\{1,2,4\}$.
If $g(vv_3)\in\{6,8\}$, then we have that 
$\{g(wv),g(vy),5,7,g(vv_3)\} \subseteq C_g[v_3]$, which contradicts $d_H(v_3)=3$. Therefore, $g(vv_3)\in\{1,2,4\}$. 

If $5\notin C_g(v_3)$ or $7\notin C_g(v_3)$, we can recolor $uv$ with $g(vv_3)$, recolor $vv_3$ with 5 or 7, and color $u$ with 7. This contradicts the property (3) of $H$.  Therefore,  $C_g(v_3)=\{g(vv_3),5,7\}$. Then, $g(vv_1)\in\{6,8\}$, and $\{g(wv),g(vy),5,7,g(vv_1)\}\subseteq C_g[v_1]$, i.e. $C_g[v_1]=\{g(wv),g(vy),5,7,g(vv_1)\}$.  We can see that $g(vv_3)\notin C_g[v_1]$ and $g(vy)\notin C_g(v_3)$. As a result, we obtain a total 8-coloring of $H$ by the following steps: recolor both $vv_3$ and $uy$ with $g(vy)$, recolor $vy$ with $g(vv_1)$, recolor $vv_1$ with $g(vv_3)$, and color $u$ with 8, a contradiction. This completes the proof of \autoref{lem3-6}(i).

\textit{Proof of \autoref{lem3-6} (ii).}  Suppose to the contrary that $H$ contains a [4,6,4,6] around the 6-vertex $v$, say $[v_1,w,u,y]$ around $v$, where $d_H(w)=d_H(y)=d_H(v)=6$ and $d_H(v_1)=d_H(u)=4$. We denote $v_1$ by $t$ (see \autoref{fig:Figure_6}). By the discussion above, we have that $\overline{C_g[v]}=5$, $\overline{C_g[w]}=8$, $\overline{C_g[y]}=6$,
    $g(wv)\in \{1,4\}$, $g(vy)\in \{1,2\}$, $\{5,7\}\subseteq C_g[t]$, $g(vt)\in\{1,2,4,6,8\}$, and $g(wt)\in\{1,3,4,5,7\}$.

    \begin{figure}[htbp]
        \centering
        \begin{minipage}{0.31\linewidth}
            \centering
            \includegraphics[width=0.8\textwidth]{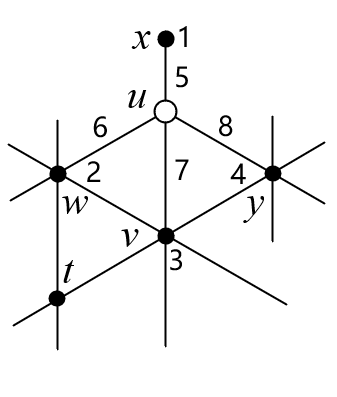}
            \caption{}
            \label{fig:Figure_6}
        \end{minipage}\qquad\qquad\qquad\quad
        \begin{minipage}{0.27\linewidth}
            \centering
            \includegraphics[width=0.9\textwidth]{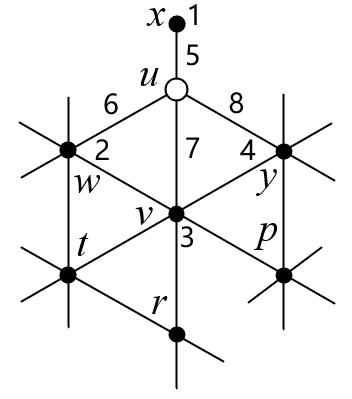}
            \caption{}
            \label{fig:Figure_7}
        \end{minipage}
    \end{figure}

We claim that $\{6,8\}\subseteq C_g[t]$. We show this by considering two cases based on the value of $g(wt)$: $g(wt)\in\{1,3,4\}$ and $g(wt)\in\{5,7\}$. 

First, $g(wt)\in\{1,3,4\}$. If $6\notin C_g[t]$ or $8\notin C_g[t]$, we can recolor $wu$ with $g(wt)$, recolor $wt$ with 6 or 8, and color $u$ with 6. Second, $g(wt)\in\{5,7\}$. In this case, if $6\notin C_g[t]$, we can recolor $uv$ with $g(wv)$, the edge $wv$ with $g(wt)$, $wt$ with 6, $wu$ with 8, $uy$ with 6, and color $u$ with 7; if $8\notin C_g[t]$, we can recolor $uv$ with $g(wv)$, the edge $wv$ with $g(wt)$, $wt$ with 8, and color $u$ with 7. In either case, we can obtain a total 8-coloring of $H$, a contradiction. Therefore, $\{6,8\}\subseteq C_g[t]$, and hence $\{6,8,5,7\}\subseteq C_g[t]$.
    
Note that $g(vt)\in\{1,2,4, 6,8\}$. When $g(vt)\in\{1,2,4\}$, we have $C_g[t]=\{g(vt),5,6,7,8\}$, $g(wt)\in \{5,7\}$, and $g(wv)\notin C_g[t]$. Then, we can obtain a total 8-coloring of $H$ by recoloring $uv$ with $g(wv)$, interchanging the colors of $wv$ and $wt$, and coloring $u$ with 7, a contradiction. When $g(vt)\in\{6,8\}$,  we have $\{6,8,5,7\}\subseteq C_g[t]$ and $\{g(wv),g(vy),5,7,g(vt)\}\subseteq C_g[t]$. Then, $\{g(wv),g(vy),5,7,6,8\}\subseteq C_g[t]$. This shows that $|C_g[t]|>5$,  contradicting to $d_H(t)=4$. This completes the proof of \autoref{lem3-6}(ii).

\textit{Proof of \autoref{lem3-6} (iii).} Suppose to the contrary that $H$ contains a [4,6,6,4,6,6] around $v$, say $[r,t,w,u,y,p]$ around $v$, where $d_H(w)=d_H(y)=d_H(v)=d_H(t)=d_H(p)=6$ and $d_H(u)=d_H(r)=4$ (see \autoref{fig:Figure_7}). From the discussion above, we can see that $\overline{C_g[v]}=5$, $\overline{C_g[w]}=8$, $\overline{C_g[y]}=6$,
    $g(wv)\in \{1,4\}$, $g(vy)\in \{1,2\}$, $\{g(vr),g(vt),g(vp),g(wv),g(vy)\}=\{1,2,4,6,8\}$, $\{5,7\}\subseteq (C_g[t]\cap C_g[r]\cap C_g[p])$, $\{6,8,5,7\}\subseteq C_g[t]$, and $5\in \{g(t),g(r),g(p)\}$. (We can obtain $\{6,8\}\subseteq C_g[t]$ by considering two cases based on the value of $g(wt)$: $g(wt)\in\{1,3,4\}$ and $g(wt)\in\{5,7\}$. It is similar to the situation in \textit{Proof of \autoref{lem3-6} (ii).}) 
    
We claim that $\{6,8,5,7\}\subseteq C_g[p]$. It is sufficient to show that $\{6,8\}\subseteq C_g[p]$. We consider two cases based on the $g(yp)$ value. Note that $g(yp)\in\{1,2,3, 5,7\}$. First,  $g(yp)\in\{1,2,3\}$. If $6\notin C_g[p]$ or $8\notin C_g[p]$, we can recolor $uy$ with $g(yp)$, recolor $yp$ with 6 or 8, and color $u$ with 8. Second, 
$g(yp)\in\{5,7\}$. If $8\notin C_g[p]$, we can recolor $uv$ with $g(vy)$, the edge $vy$ with $g(yp)$, $yp$ with 8, $uy$ with 6, $wu$ with 8, and color $u$ with 7. If $6\notin C_g[p]$, we can recolor $uv$ with $g(vy)$, the edge $vy$ with $g(yp)$, $yp$ with 6, and color $u$ with 7. In either case, we can obtain a total 8-coloring of $H$, a contradiction. Therefore, $\{6,8\}\subseteq C_g[p]$, and hence $\{6,8,5,7\}\subseteq C_g[p]$.

Note that  $g(vr)\in\{1,2,4,6,8\} \setminus \{g(wv),g(vy)\}$. In the following, we deal with two cases based on the   $g(vr)$ value: $g(vr)\in\{6,8\}$, and $g(vr)=\{1,2,4\}\backslash\{g(wv),g(vy)\}$.

\textit{Case 1.} $g(vr)\in\{6,8\}$. In this case, we have $\{g(wv),g(vy),5,7,g(vr)\}\subseteq C_g[r]$ by \autoref{claim4-5}, i.e. $C_g[r]=\{g(wv),g(vy),5,7,g(vr)\}$($d_H(r)=4$). 
    Let $\{k\}=\{6,8\}\setminus\{g(vr)\}$.  It is evident that $k\notin C_g[r]$, and $g(tr)\in\{g(wv),g(vy),5,7\}$.
   Additionally, we will explore two subcases: $g(vt)=k$ and $g(vp)=k$.

\textit{Case 1.1.} $g(vt)=k$. Consider the color assigned to the edge $tr$. If $g(tr)=g(wv)$, we can recolor both $wu$ and $vt$ with $g(wv)$, recolor both $wv$ and $tr$ with $k$, and color $u$ with 6. If $g(tr)=g(vy)$, we can recolor both $uy$ and $vt$ with $g(vy)$, recolor both $vy$ and $tr$ with $k$, and color $u$ with 8. If $g(tr)=5$, we can recolor $wu$ with $g(wv)$, recolor both $wv$ and $tr$ with $k$, recolor $vt$ with 5, and color $u$ with 6.  If $g(tr)=7$, we can recolor $uv$ with $g(wv)$, recolor both $wv$ and $tr$ with $k$, recolor $vt$ with 7, and color the vertex $u$ with 7; in particular, when $k=g(wu)$, we need to interchange the colors of the edges $wu$ and $uy$.

\textit{Case 1.2.} $g(vp)=k$. In this case,  we have $\{g(wv),g(vy),g(vt)\}=\{1,2,4\}$, $\{g(wv),g(vy),5,7,k\}\subseteq C_g[p]$ by \autoref{claim4-5}, and $\{6,8,5,7\}\subseteq C_g[p]$, i.e. $\{g(wv),g(vy),5,7,6,8\}\subseteq C_g[p]$. Hence, $\overline{C_g[p]}\in \{g(vt),3\}$ and $\overline{C_g[r]}=\{g(vt),3,k\}$.

First, $\overline{C_g[p]}=g(vt)$. We consider the color assigned to the edge $tr$. If $g(tr)=g(wv)$, we can recolor both $vp$ and $tr$ with $g(vt)$, recolor both $vt$ and $wu$ with $g(wv)$, recolor $wv$ with $k$, and color $u$ with 6. If $g(tr)=g(vy)$, we can recolor both $vp$ and $tr$ with $g(vt)$, recolor both $vt$ and $uy$ with $g(vy)$, recolor $vy$ with $k$, and color $u$ with 8.  If $g(tr)=5$ or $g(tr)=7$, we can recolor both $uv$ with $g(vt)$, interchange the colors of the edges $vt$ and $tr$, and color the vertex $u$ with 7.

Second, $\overline{C_g[p]}=3$. In this case, we consider the colors assigned to the vertices $t$, $r$, and $p$. If $k\notin \{g(t),g(r),g(p)\}$, we can recolor $v$ with $k$, recolor $vp$ with 3, and color $u$ with 3. If $g(vr)\notin \{g(t),g(r),g(p)\}$, we can recolor $v$ with $g(vr)$, recolor $vr$ with 3, and color $u$ with 3. (As above mentioned, if $5\notin \{g(t),g(r),g(p)\}$, we can recolor the vertex $v$ with 5 and color the vertex $u$ with 3). Consequently, we have  $\{g(t),g(r),g(p)\}=\{k,g(vr),5\}=\{6,8,5\}$, i.e. $7\notin \{g(t),g(r),g(p)\}$.   Then, we can recolor the vertex $v$ with 7, recolor $vp$ with 3, recolor $uv$ with $g(vy)$, recolor $vy$ with $k$, and color the vertex $u$ with 3; we also need to interchange the colors of the edges $wu$ and $uy$ when $k=g(uy)$.

\textit{Case 2.} $g(vr)=\{1,2,4\}\setminus \{g(wv),g(vy)\}$. In this case, we have $g(vt)\in\{6,8\}$. Observe that $\{6,8,5,7\}\subseteq C_g[t]$, and $\{g(wv),g(vy),5,7,g(vt)\}\subseteq C_g[t]$ by \autoref{claim4-5}, i.e. $\{g(wv),g(vy),5,7,6,8\}\subseteq C_g[t]$.
    We can see that $\overline{C_g[t]}\in\{3,g(vr)\}$, $5 \in \{g(t),g(r),g(p)\}$, and $\{5,7,g(vr)\}\subseteq C_g[r]$.
We further consider two cases based on $\overline{C_g[t]}$: $\overline{C_g[t]}=3$ and $\overline{C_g[t]}=g(vr)$.
    
\textit{Case 2.1.} $\overline{C_g[t]}=3$. We first analyze the colors assigned to the vertices $t$, $r$, and $p$.
If $g(vt)\notin \{g(t),g(r),g(p)\}$, we can recolor $v$ with $g(vt)$, recolor $vt$ with 3, and color $u$ with 3. 
If $7\notin\{g(t),g(r),g(p)\}$, we can recolor $v$ with 7, recolor $uv$ with $g(wv)$, recolor $wv$ with $g(vt)$, recolor $vt$ with 3, and color the vertex $u$ with 3; we also need to interchange the colors of the edges $wu$ and $uy$ when $g(vt)=g(wu)$.

The above shows that $\{g(t),g(r),g(p)\}=\{g(vt),7,5\}$, $1\notin \{g(t),g(r),g(p)\}$, and $1\in \{g(wv),g(vy),g(vr)\}$. Furthermore, when 
 $g(wv)=1$, we can recolor both $wu$ and $v$ with 1, recolor $wv$ with $g(vt)$, recolor $vt$ with 3, and color $u$ with 3.
When $g(vy)=1$, we can recolor both $uy$ and $v$ with 1, recolor $vy$ with $g(vt)$, recolor $vt$ with 3, and color $u$ with 3. 
When $g(vr)=1$, if $g(wv)\notin C_g[r]$, we can recolor both $vr$ and $wu$ with $g(wv)$, recolor $wv$ with $g(vt)$, the edge $vt$ with 3, the vertex $v$ with 1, and color $u$ with 3; if $g(vy)\notin C_g[r]$, we can recolor both $vr$ and $uy$ with $g(vy)$, recolor $vy$ with $g(vt)$, $vt$ with 3, $v$ with 1, and color $u$ with 3.

Consequently, we have  $C_g[r]=\{g(wv),g(vy),5,7,g(vr)\}$, and $3\notin C_g[r]$. Then, we recolor $vr$ with 3, recolor $v$ with 1, and color $u$ with 3.

\textit{Case 2.2.} $\overline{C_g[t]}=g(vr)$. If $g(wv)\notin C_g[r]$, we can recolor both $vr$ and $wu$ with $g(wv)$, recolor $wv$ with $g(vt)$, recolor $vt$ with $g(vr)$, and color $u$ with 6.
If $g(vy)\notin C_g[r]$, we can recolor both $vr$ and $uy$ with $g(vy)$, recolor $vy$ with $g(vt)$, recolor $vt$ with $g(vr)$, and color $u$ with 8. Therefore, we have $C_g[r]=\{g(wv),g(vy),g(vr),5,7\}=\{1,2,4,5,7\}$. 
It is evident that $6\notin C_g[r]$, $8\notin C_g[r]$, and $g(tr)\in\{g(wv),g(vy),5,7\}$. We proceed by discussing the color assigned to the edge $tr$.
    
First,  $g(tr)=7$. We can recolor $uv$ with $g(vy)$, recolor both $vy$ and $tr$ with $g(vt)$, recolor $vt$ with 7, and color the vertex $u$ with 7; we also need to interchange the colors of the edges $wu$ and $uy$ when $g(vt)=g(uy)$.
Second, $g(tr)=g(wv)$. Then, we can recolor both $wu$ and $vt$ with $g(wv)$, recolor both $wv$ and $tr$ with $g(vt)$, and color $u$ with 6. Third, 
$g(tr)=g(vy)$. We can recolor both $uy$ and $vt$ with $g(vy)$, recolor both $vy$ and $tr$ with $g(vt)$, and color $u$ with 8. Finally,  $g(tr)=5$. we can recolor $wu$ with $g(wv)$, recolor both $wv$ and $tr$ with $g(vt)$, recolor $vt$ with 5, and color $u$ with 6.

In all the analyzed cases, it has been consistently demonstrated that the graph \( H \) possesses a total 8-coloring in these specific situations. This observation ultimately leads to a contradiction. Consequently, the validity of \autoref{lem3-6} is confirmed. 


\section{Discussion}\label{sec:discussion}
Let \( G \) be a planar graph with a maximum degree of six. If \( G \) contains a subgraph that is isomorphic to a mushroom (as illustrated in \autoref{fig:mushroom}),  a tent (as depicted in \autoref{fig:tent}) or a cone (as shown in \autoref{fig:cone}), we cannot assert, according to \autoref{thm:theorem1}, that \( G \) is totally 8-colorable. As a result, this situation remains an unresolved aspect of TCC. We have tried to expand the methods used in this paper to address this unresolved issue. We rewrite Euler's formula as follows:

\[
\sum_{v \in V(H)} (\lambda d_H(v) - 1) + \sum_{f \in F(H)} (\mu d_H(f) - 1) = -2 < 0,
\]
where \( \lambda \) and \( \mu \) are real parameters such that \( \lambda + \mu = \frac{1}{2} \). When \( \lambda = \mu = \frac{1}{4} \), it is just the formula utilized in this paper. Our goal is to identify the optimal values of \( \lambda \) and \( \mu \) to cope with more complicated cases. Based on these values, we will assign initial charges to a minimal counterexample and devise a discharging procedure aimed at establishing a contradiction. However, this presents a particularly challenging case.

Consider a minimal counterexample in which all \( k \)-neighbors, for \( k = 5, 6 \), of the 5-vertices—each incident with five 3-faces—are also incident with \( k \) 3-faces. To establish a contradiction, it is essential to ensure that these  5-vertices maintain nonnegative charges following the transfer of charges. In particular, the parameters \( \lambda \) and \( \mu \) must satisfy the following inequality:

\[
(\mu \cdot 3 - 1) + (\frac{\lambda \cdot 5 - 1}{5}\cdot 3) \geq 0.
\]
Moreover, consider the following expression:

\[
(\mu \cdot 3 - 1) + \frac{\lambda \cdot (\Delta - 1) - 1}{\Delta - 1}\cdot 3 \geq 0.
\]
From this, one can derive that \( \Delta \geq 7 \). However, when \( \Delta = 6 \), the following holds true:

\[
(\mu \cdot 3 - 1) + (\frac{\lambda \cdot 5 - 1}{5}\cdot 3) = (\mu + \lambda)\cdot 3 - 1 - \frac{3}{5} = -\frac{1}{10} < 0.
\]
This suggests that suitable values for \( \lambda \) and \( \mu \) do not exist under the given conditions. Therefore, when utilizing this method, minimal counterexamples must either ensure that the 5-vertices receive sufficient charges from their neighboring vertices or exclude specific configurations from consideration. In the first scenario, it may be necessary to expand the neighborhoods layer by layer, which increases complexity and calls for the development of more refined discharging rules. In the second scenario, there are numerous combinations to evaluate, and complex situations may require the application of alternative innovative methods or advanced solutions to address them effectively.

\bibliographystyle{ieeetr}

\end{document}